\documentclass[journal,10pt,letterpaper]{IEEEtran}

\usepackage{amsmath,amssymb,amsfonts,amsthm}
\usepackage{mathtools}
\usepackage{bm}
\usepackage{cite}
\usepackage{graphicx}
\usepackage{booktabs}
\usepackage{url}
\usepackage{xcolor}
\usepackage{microtype}

\newtheoremstyle{ieeeplain}
  {3pt}        % Space above
  {3pt}        % Space below
  {\itshape}   % Body font
  {}           % Indentation
  {\bfseries}  % Head font
  {:}          % Punctuation after theorem head
  {0.5em}      % Space after theorem head
  {\thmname{#1}\thmnumber{ #2}\thmnote{ #3}}

\newtheoremstyle{ieeenormal}
  {3pt}        % Space above
  {3pt}        % Space below
  {\normalfont}% Body font
  {}           % Indentation
  {\bfseries}  % Head font
  {:}          % Punctuation after theorem head
  {0.5em}      % Space after theorem head
  {\thmname{#1}\thmnumber{ #2}\thmnote{ #3}}

\theoremstyle{ieeeplain}
\newtheorem{theorem}{Theorem}
\newtheorem{corollary}{Corollary}

\theoremstyle{ieeenormal}
\newtheorem{definition}{Definition}
\newtheorem{remark}{Remark}
\newtheorem{example}{Example}
\newtheorem{assumption}{Assumption}

\newif\ifincludeauthorbios
\includeauthorbiostrue

\begin{document}

\title{Sufficient Conditions for Exact Convergence Time \\in Prescribed-Time Control}

% TODO: Replace the two placeholder authors and verify IEEE memberships.
\author{Jiyuan~Kuang,
        Second~Author,
        Third~Author%
\thanks{This work was supported in part by the National Natural Science
Foundation of China under Grants 62503132, 62373127, and 62188101; in
part by the Natural Science Foundation of Shandong Province under Grant
ZR2025QC1515; and in part by the State Key Laboratory of Alternate
Electrical Power System with Renewable Energy Sources under Grant
LAPS23005. (Corresponding author: .)}%
\thanks{Jiyuan Kuang and [coauthors at Weihai] are with the Department of
Control Science and Engineering, Harbin Institute of Technology, Weihai
264209, China (e-mail: jiyuankuang@hit.edu.cn).}%
\thanks{, and [coauthors at Harbin] are with the
Department of Control Science and Engineering, Harbin Institute of
Technology, Harbin 150001, China (e-mail: yabingao@hit.edu.cn;
jx.liu@hit.edu.cn).}%
}

\markboth{IEEE Transactions on ~,~Vol.~XX, No.~XX, 2026}%
{Kuang \MakeLowercase{\textit{et al.}}: Sufficient Conditions for Exact Convergence Time in Prescribed-Time Control}

\maketitle

\begin{abstract}
In this paper, we propose a series of theoretical results for prescribed-time control to rigorously prove
that the closed-loop system will not reach the equilibrium point before the prescribed time. Thus, it is convenient to prove that the settling time is exactly equal to the prescribed time instant. 
In other words, conservative estimation of the settling time, which often accompanies classical finite-/fixed-time or prescribed-time control results, is theoretically avoided in this paper. 
Specifically, we introduce a local Lipschitz condition, a local homogeneous linear-growth condition, and an Osgood condition
in sequence. Both reduction to absurdity and the Lyapunov method are applied to demonstrate that the system cannot converge to zero before the prescribed time instant.
Finally, the proposed framework is applied to prescribed-time controllers based on adaptive backstepping, parametric Lyapunov equations, and distributed leader-following consensus, 
 demonstrating the generality of the proposed results.
\end{abstract}

\begin{IEEEkeywords}
Prescribed-time control,
Finite-time stability,
Exact convergence time,
Multi-agent systems.
\end{IEEEkeywords}

\section{Introduction}
\label{sec:introduction}

\IEEEPARstart{U}{nlike}  classical asymptotic control, the key property of finite-time control is that the convergence is achieved within a finite time interval, not asymptotically as time approaches infinity \cite{2000finite}. 
For some finite-time control, the settling time (the first time closed-loop systems converge to and then stay at the equilibrium) may be uniformly bounded for all initial conditions \cite{finite}. 
This property of finite-time stabilized systems is referred to as fixed-time stability, since the settling time is strictly proven not to exceed a value independent of initial conditions, denoted by $T$.
Conventionally, $T$ in fixed-time control is a function of the controller parameters, and conservative estimation of the settling time is inevitable \cite{2011Fixed}.

Subsequently, the prescribed-time control for high-order nonlinear systems was proposed to achieve fixed-time stability using time-dependent regulators \cite{2017prescribed1}.
It should be noted that $T$ is a constant in the prescribed-time control, independent of the initial state or any other design parameter \cite{2025dis}. 
Furthermore, the prescribed-time control has the potential to make the settling time equal to the prescribed time $T$, a constant prescribed by the controller \cite{yezongshu}.

In recent years, prescribed-time control has been widely studied, and numerous advanced results have been reported for various systems. A dynamic high-gain design was developed for prescribed-time regulation of nonlinear systems \cite{krishnamurthy}. Output-feedback prescribed-time stabilization was achieved for reaction--diffusion PDEs with boundary input delay \cite{10855508}. A sensor-delay-compensated prescribed-time observer was proposed for linear time-invariant systems \cite{espitiasteeves}. Adaptive nonlinear prescribed-time control with filterless least squares was developed for online parameter estimation \cite{2025v}. Tunable-bound protocols using hyperbolic tangent feedback were designed for prescribed-time distributed optimization \cite{zhou2026tunable}. Robust prescribed-time output feedback and its digital implementation were investigated for linear systems with disturbance \cite{orlov2026output}.

In addition to the main research directions discussed above, some studies have consistently sought to rigorously prove that prescribed-time systems cannot converge to the equilibrium point before the prescribed time. 
This is because an exact convergence time is useful for certain practical missions. For example, a piano performance task requires a robotic manipulator to strike the key at an exact moment.
In addition, there are many multi-agent missions that depend greatly on how
and when the  multi-agent system  states converge  \cite{smca}, e.g., spacecraft formation flying (PROBA-3, EXO-S, SWIFT \cite{daodan}), and  cooperative missile attacks \cite{ldytac}.

This paper focuses on exact prescribed-time convergence, i.e., the closed-loop system converges to the origin exactly at the prescribed time instant. For high-order integrator chains, backstepping prescribed-time controllers were developed, while premature convergence was rigorously ruled out through a contradiction argument and a two-sided Lyapunov analysis \cite{mytc,jas}. This Lyapunov-based framework was subsequently extended to second-order systems subject to unmatched uncertainties \cite{dianlu2}.
An alternative line of research achieves exact prescribed-time stabilization through periodic delayed feedback, thereby avoiding the terminally singular high gains commonly used in prescribed-time control \cite{strongpres}. In this framework, trajectory analysis was employed to verify the prescribed settling time. Building on the same idea, periodic delayed feedback was further integrated with a super-twisting algorithm to guarantee exact prescribed-time convergence in the presence of matched and mismatched disturbances \cite{ding2026super}.
Exact convergence at a prescribed constant settling time was also achieved using homogeneous static feedback, under which every nonzero trajectory reaches the origin precisely at the assigned time \cite{kirsticreview}. This result was further generalized to systems with input delays.
More recently, exact $T$-finite-time stability was formally characterized through a two-sided Lyapunov criterion that guarantees convergence at $T$ while excluding convergence before $T$. This result can also be applied to MIMO nonlinear stabilization and observer design \cite{Zhou2026Exact}.

For MASs, distributed control protocols and observers are generally designed according to the communication graph \cite{deHeij2025Distributed}. By introducing appropriate coordination errors, the closed-loop network dynamics can be reduced to the corresponding disagreement dynamics  \cite{2022tii}. Therefore, applying the prescribed-time control of these error dynamics allows the preceding results to be extended to multi-agent coordination problems, including  containment control \cite{19wang}, formation tracking \cite{Li2025Formation}, and leader-follower consensus \cite{Ye2025Decentralized}.
Therefore, it is easy to come up with the idea of extending $T$-finite-time stability to some time-critical multi-agent consensus problems.

This article investigates exact 
$T$-finite-time stability for a general class of prescribed-time closed-loop systems. The central idea is to separate exact prescribed-time convergence into two complementary properties: convergence to the equilibrium no later than 
$T$, which can be established using existing prescribed-time stability methods, and non-reachability of the equilibrium over the interval $[0,T)$. Accordingly, this article focuses on identifying verifiable conditions that exclude premature convergence. Three sufficient criteria with progressively weaker regularity requirements are developed, namely local Lipschitz continuity, local homogeneous linear growth, and an Osgood-type growth condition. Their logical relationships and applicability are examined through theoretical analysis and illustrative examples. The resulting framework is subsequently extended to multi-agent disagreement dynamics and applied to several representative prescribed-time control systems to rigorously certify that their settling times are exactly equal to the prescribed time  $T$.

The remainder of this article is organized as follows. Section II introduces the relevant stability notions, formulates the exact prescribed-time convergence problem, and presents motivating examples that distinguish prescribed-time convergence from exact convergence. Section III establishes the three sufficient conditions for excluding premature convergence, clarifies their hierarchical relationships, and extends the Osgood-based criterion to multi-agent disagreement systems. Section IV applies the proposed results to prescribed-time stabilization based on adaptive backstepping and parametric Lyapunov equations, as well as to distributed prescribed-time leader-following consensus. Section V concludes the article.

\section{Problem statement and preliminaries}
\label{section2}

\subsection{Preliminaries}

Consider the following system:
\begin{equation}
	\dot x=g(x,u),
\end{equation}	
where $x=[x_1,x_2,...,x_n] \in \mathbb{R}^{n}$ is the state vector, and $u(x, t) \in \mathbb{R}^{m}$ is a prescribed-time controller. Throughout this paper, $t_0=0$, and the initial value is denoted as $x(0)$.
Then, we have the following closed-loop system:
\begin{equation}
	\dot x=f(x,t),\label{ft1}
\end{equation}	
where $f:\mathbb{R}^{n}\times \mathbb{R}_{\geq 0} \rightarrow \mathbb{R}^n$ is
piecewise continuous in $t$ and locally Lipschitz in $x$.

\begin{definition}  [\cite{slotine1991applied}]
The equilibrium point \(x=\mathbf{0}\) of system \eqref{ft1} is classified as follows:
\begin{enumerate}
    \item Stable, if there exists a class \(\mathcal{K}\)
    function \(\beta\) such that
    \[
        \lVert x(t)\rVert
        \leq
        \beta\bigl(\lVert x(0)\rVert\bigr).
    \]

    \item Asymptotically stable, if it is stable and
    \[
        x(t)\rightarrow 0,
        \quad \text{as } t\rightarrow\infty.
    \]

    \item Exponentially stable, if there exist two positive
    numbers \(\lambda_{1}\) and \(\lambda_{2}\) such that, for
    sufficiently small \(x(0)\),
    \[
        \lVert x(t)\rVert
        \leq
        \lambda_{1}\lVert x(0)\rVert e^{-\lambda_{2}t},
       \quad \forall\,t\geq 0.
    \]
\end{enumerate}
\end{definition}

\begin{definition}  [\cite{Holloway2019Prescribed}]
The equilibrium point  \(x=\mathbf{0}\)  of system \eqref{ft1} is prescribed-time globally uniformly asymptotically stable in
time \(T\) (or simply, prescribed-time stable) if there exists a function
    $\mu : [0,T) \rightarrow \mathbb{R}_{+}$
with \(\mu\) increasing to \(\infty\) as \(t \rightarrow T\), and a
class \(\mathcal{KL}\) function \(\beta\) such that $ \lVert x(t) \rVert
    \leq \beta\bigl(\lVert x(0) \rVert,\mu(t)\bigr), \forall\, t \in [0,T),$
where \(T\) is a finite designed  number.
\end{definition}

The prescribed-time globally uniformly asymptotically stability is defined with a time-varying gain \(\mu\). Its main property is $\lim_{t\to T} x(t)=0$. Notice that fixed-time stability also achieves $\lim_{t\to T} x(t)=0$ \cite{Zuo2026FixedTimeConsensus},  we introduce the following equivalent definition.

\begin{definition} [\cite{zhou2026finite}]
The equilibrium point  \(x=\mathbf{0}\)  of system \eqref{ft1} is $T$-finite-time stable if it is stable, and there exists a constant $T>0$ such that 
$\lim_{t\to T} x(t)=0$.
\end{definition}

Notice that $\lim_{t\to T} x(t)=0$ does not mean that $x(t)$ converges to zero exactly at $t=T$. We introduce the following definitions to show the difference.

\begin{definition} [\cite{zhou2026finite}] \label{exactT}
The equilibrium point  \(x=\mathbf{0}\)  of system  \eqref{ft1} is exact $T$-finite-time stable if it is $T$-finite-time stable  and $x(t)\neq 0, \forall\, t\in[0,T)$ for every initial condition
\(x(0)\neq 0\). 
\end{definition}

\begin{definition} [\cite{JimenezRodriguez2020Lyapunov}]
If the equilibrium point  \(x=\mathbf{0}\)  of  system \eqref{ft1} is $T$-finite-time stable and $ x(0) \ne 0$, then the settling time is defined as $T_s := \inf\{t_s > 0 : x(t) = 0, \forall t\ge t_s\} $.
\end{definition}

The settling time is the first convergence time instant after which the trajectory stays at the origin.
 If the equilibrium point  \(x=\mathbf{0}\) of system  \eqref{ft1} is exact $T$-finite-time stable, then $T_s=T$.

\subsection{Problem statement}

To clarify the misunderstanding that the prescribed-time globally uniformly asymptotically stable system
 necessarily converges to the
origin exactly at the prescribed time,
 the following two examples are designed.

\begin{example}
Consider the scalar prescribed-time system with an arbitrarily small constant perturbation \(\varepsilon>0\):
\begin{equation*}
    \dot{x}(t)
    =
    -\frac{x(t)}{T-t}-\varepsilon,
    \quad 0\leq t<T.
\end{equation*}
Before the trajectory first reaches the origin, its solution is
\begin{equation*}
    x(t)
    =
    (T-t)
    \left[
        \frac{x_0}{T}
        +
        \varepsilon
        \ln\left(\frac{T-t}{T}\right)
    \right].
\end{equation*}
The first hitting time of the origin is
\begin{equation*}
    t_s
    =
    T\left(
        1-\exp\left(-\frac{x_0}{\varepsilon T}\right)
    \right).
\end{equation*}
An  arbitrarily small perturbation that does not vanish at the
origin causes the trajectory to reach the origin strictly before the
prescribed time.

\end{example}

Therefore,
systems subject to persistent perturbations are
not considered in this paper. The condition \(f(t,0)=0\) is imposed to
guarantee the invariance of the origin and to make the exact
settling-time analysis meaningful.

\begin{example}
Consider the following second-order system:
\begin{equation*}
\begin{aligned}
\dot x_1
&=
-\frac{1}{T-t}
\left(
\frac{1}{2}x_1
+
|x_1|^{1/2}\operatorname{sgn}(x_1)
\right)
+x_2,\\[1mm]
\dot x_2
&=
-\frac{1}{T-t}
\left(
\frac{1}{2}x_2
+
|x_2|^{1/2}\operatorname{sgn}(x_2)
\right)
-x_1,
\end{aligned}
\end{equation*}
where \(T>0\) and \(0\le t<T\). At the origin, the right-hand side is defined as zero.
Choose the Lyapunov function
$V(x)=\frac{1}{2}(x_1^2+x_2^2)$. Then
\begin{equation*}
    \dot V=
-\frac{V}{T-t}
-\frac{1}{T-t}
\left(
|x_1|^{3/2}+|x_2|^{3/2}
\right).
\end{equation*}
Notice that
\begin{equation*}
    |x_1|^{3/2}+|x_2|^{3/2}
    \ge
    (x_1^2+x_2^2)^{3/4}
    =
    (2V)^{3/4}.
\end{equation*}
We obtain
\begin{equation*}
    \dot V
    \le
    -\frac{V}{T-t}
    -
    \frac{(2V)^{3/4}}{T-t}.
\end{equation*}
Let $Z(t)=V^{1/4}(t)$. Then we have
\begin{equation*}
    \dot Z=
    \frac{1}{4}V^{-3/4}\dot V \notag\\
   \le
    -\frac{Z+2^{3/4}}{4(T-t)}\le
    -\frac{Z}{4(T-t)}.
\end{equation*}
It is easy to obtain
\begin{equation*}
x(t) \le \beta(x(0), \mu(t)), \quad \forall t \in
[0, T),
\end{equation*}
where $\beta$ is a $\mathcal{KL}$ function. Therefore, $x(t)$ is prescribed-time globally uniformly
asymptotically stable\cite {Holloway2019Prescribed}.
One can obtain
\begin{equation*}
    Z(t)+2^{3/4}
    \le
    \bigl(Z(0)+2^{3/4}\bigr)
    \left(
    \frac{T-t}{T}
    \right)^{1/4}.
\end{equation*}
Hence, \(Z(t)\) reaches zero no later than
\begin{equation*}
    t_s
    =
    T
    \left[
    1-
    \left(
    \frac{2^{3/4}}{Z(0)+2^{3/4}}
    \right)^4
    \right].
\end{equation*}
Since $Z(0)>0$, for all $x(0)\neq0$, we have $t_s<T$, i.e., $x(t)$ reaches the origin before the prescribed time \(T\).
\end{example}

Notice that prescribed-time control usually introduces a time-varying gain. We
consider the following system,
\begin{equation}  	
\dot{x}(t)
    =
    f(t,x)
    :=P_0(t,x)+
    \sum_{k=1}^{N}
    \frac{P_k(x)}{\Phi_k(T-t)}, 0\leq t<T,
 \label{mainsystem}
\end{equation}
where \(x\in\mathbb{R}^n\). The right-hand side contains $P_0(t,x)$, state-dependent numerators $P_k(x), k=1,\ldots, N$, and
time-varying denominators $\Phi_k(T-t)$. In addition, $\dot{x}(t)=\mathbf{0},\forall t \ge T$.

\begin{assumption}
\label{ass:caratheodory}
Consider system \eqref{mainsystem}.
The following conditions hold for every  $ \bar T<T$:
\begin{enumerate}
\item
For each fixed $x\in\mathbb{R}^n$, the mapping $ P_0 : t\mapsto P_0(t,x)$
is measurable on $[0,\bar T]$.
For almost every $t\in[0,\bar T]$, the mapping
$x\mapsto P_0(t,x)$
is piecewise continuous.

\item
Each mapping $P_k:\mathbb{R}^n\to\mathbb{R}^n$
is continuous.
For every compact set $K\subset\mathbb{R}^n$, there exists an integrable function $m_{\bar T,K}\in L^1([0,\bar T])$,
such that $\|P_0(t,x)\|\leq m_{\bar T,K}(t)$, $\forall x\in K$ and for almost every $t\in[0,\bar T]$.
\end{enumerate}
\end{assumption}

In some literature on prescribed-time control, $\frac{P_k(x)}{\Phi_k(T-t)}$ is $\frac {x}{T-t}$ \cite{duibiao}. However, $\frac{P_k(x)}{\Phi_k(T-t)}$ can also be other forms, e.g., $\frac{x}{\mathrm{e}^{T-t}-1}$ \cite{zhou2020finite}, and $\frac{\mathrm{e}^{ax}-\mathrm{e}^{-bx}}{T-t}$ \cite{ye2022prescribed}. 
$P_0(t,x)$ is the cross term of state $x$ or other terms.

\begin{assumption} \label{prescribe}
The equilibrium point  \(x=\mathbf{0}\)  of  closed-loop  system \eqref{mainsystem} is $T$-finite-time stable. Therefore, $x(t)$ is bounded and $\lim_{t\to T^-}x(t)=0$.
\end{assumption}

\begin{remark}
Assumption~\ref{ass:caratheodory} and \ref{prescribe} is assumed to hold throughout this paper.
Specifically, Assumption \ref{ass:caratheodory} guaranties the vector field
$f(t,x)$ is a Carath\'eodory vector field on every compact
subinterval of $[0,T)$. Therefore, for each initial condition, the
system \eqref{mainsystem} admits at least one
locally absolutely continuous solution. 
Assumption \ref{prescribe} guaranties $\lim_{t\to T^-}x(t)=0$, i.e., the equilibrium point  \(x=\mathbf{0}\)  of system \eqref{mainsystem} is $T$-finite-time stable.
\textbf{Hence, the sole objective of this paper is to identify additional conditions under which system~\eqref{mainsystem} is exact $T$-finite-time stable.}
\end{remark}

\section{Main results}

\textbf{Core idea:}
By continuity, any trajectory that reaches the origin at some time $t_s<T$ must enter and remain in a sufficiently small neighborhood of the origin over a nontrivial interval immediately preceding $t_s$. Therefore, to rule out premature convergence, it suffices to impose a local condition ensuring that no nonzero trajectory in this neighborhood can reach the origin in finite time before $T$. This observation explains why the sufficient conditions developed below are required only locally around the origin.

\subsection{Local Lipschitz continuity condition}

\begin{theorem}
\label{maintheorem}
The solution of system \eqref{mainsystem} with any nonzero initial
condition $x(0)\ne 0$
cannot reach the origin at any time \(t_s<T\),
if the following conditions hold:
\begin{enumerate}
    \item \(P_0(t,0)=0\), \( \mapsto P_0(t,x):\mathbb{R}_{\geq 0} \times \mathbb{R}^n\to\mathbb{R}^n\)
    is locally Lipschitz in $x$ and piecewise continuous in $t$;

    \item\(P_k(0)=0\),  \( \mapsto P_k(x):\mathbb{R}^n\to\mathbb{R}^n\)
    is locally Lipschitz in $x$, \(k=1,\ldots,N\);

    \item
    $\Phi_k(T-t):[0,T)\to\mathbb{R}_{> 0}$ is continuous, \(k=1,\ldots,N\).
\end{enumerate}
\end{theorem}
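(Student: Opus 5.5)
We argue by contradiction, using a Grönwall estimate on the squared norm. Let $x(0)\neq 0$ and suppose the solution reaches the origin at some $t_s<T$. Because $x(t)$ is continuous and $x(0)\neq0$, we may take $t_s$ to be the first hitting time. Then $x(t)\neq 0$ on $[0,t_s)$ and $x(t_s)=0$.

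Fix $\bar T$ with $t_s<\bar T<T$. The trajectory on $[0,t_s]$ is continuous, so it lies in a compact ball $K=\{\|x\|\le R\}$. By conditions 1) and 2), together with $P_0(t,0)=0$ and $P_k(0)=0$, there are Lipschitz constants $L_0$ and $L_k$ on $K$ with
\[
\|P_0(t,x)\|\le L_0\|x\| \quad\text{and}\quad \|P_k(x)\|\le L_k\|x\| \quad \text{for } x\in K,\ t\in[0,\bar T].
\]
If $P_0$ is only locally Lipschitz with a constant depending on $t$, piecewise continuity in $t$ on the compact interval $[0,\bar T]$ still gives a uniform bound; this is the point to check carefully.

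By condition 3), each $\Phi_k(T-t)$ is continuous and positive on the compact interval $[0,\bar T]$, so it has a positive minimum $\phi_k>0$ there. This is exactly where $t_s<T$ is used: the singularity of the gains occurs only at $t=T$. Consequently
\[
\|f(t,x)\|\le L\|x\|, \qquad L:=L_0+\sum_{k=1}^N \frac{L_k}{\phi_k},
\]
for all $x\in K$ and $t\in[0,\bar T]$.

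Now set $V(t)=\|x(t)\|^2$. This function is absolutely continuous, and for almost every $t\in[0,t_s]$ it satisfies $\dot V=2x^\top f(t,x)\ge -2L V$. Grönwall's inequality applied in the lower-bound direction gives
\[
V(t)\ge V(0)e^{-2Lt}.
\]
At $t=t_s$ this yields $V(t_s)\ge \|x(0)\|^2e^{-2Lt_s}>0$, which contradicts $x(t_s)=0$. Hence no nonzero trajectory reaches the origin before $T$.

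An alternative route is uniqueness: on $[0,\bar T]$ the vector field is locally Lipschitz in $x$ and vanishes at $0$, so the zero solution through $(t_s,0)$ is the unique solution, and running it backward forces $x(0)=0$. The Grönwall route avoids having to discuss backward uniqueness for vector fields that are only piecewise continuous in $t$, so we use it.

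The main obstacle is securing the uniform linear bound $\|f(t,x)\|\le L\|x\|$ on $K\times[0,\bar T]$. This requires the time-uniform Lipschitz control of $P_0$ and the strictly positive lower bound on $\Phi_k$, both of which hold only because $\bar T<T$.
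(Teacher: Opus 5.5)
Your argument is correct, but it is not the paper's route for this theorem. The paper argues by uniqueness. On a compact window $I$ around $t_s$ and a ball $B_r$, it obtains a Lipschitz constant $L=L_0+\sum_k M_kL_k$ for the whole field $f$. It then applies Picard--Lindel\"of to the problem with $x(t_s)=0$ and observes that $x\equiv 0$ is a solution because $f(t,0)=0$.

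You instead use only the linear-growth consequence $\|f(t,x)\|\le L\|x\|$ of the Lipschitz and vanishing hypotheses, followed by a lower Gr\"onwall bound on $\|x\|^2$. This is exactly the Lyapunov argument the paper reserves for Theorem~\ref{maintheorem2}. Your route has three advantages:
\begin{itemize}
\item It works for every absolutely continuous solution, with no appeal to uniqueness.
\item It yields the explicit bound $\|x(t)\|\ge \|x(0)\|e^{-Lt}$ on $[0,t_s]$.
\item It shows that Lipschitz continuity enters only through a linear bound at the origin, so it already contains the generalization in Theorem~\ref{maintheorem2}.
\end{itemize}
It also avoids a step the paper leaves implicit. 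The cited existence--uniqueness theorem is forward in time, whereas the contradiction needs uniqueness on $[t_s-\varepsilon,t_s]$, which requires a time-reversal argument. That step is routine under a Lipschitz condition and unaffected by piecewise continuity in $t$, so it is not really the obstacle you suggest. The paper's route, in turn, frames Theorem~\ref{maintheorem} as a uniqueness statement, which motivates its later weakening of the hypotheses.

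One side remark in your proposal is wrong. If the Lipschitz constant of $P_0$ may depend on $t$, piecewise continuity in $t$ does \emph{not} restore a uniform bound. Take $P_0(t,x)=-|t-t^*|\sin\bigl(x/|t-t^*|^{3}\bigr)$ with $P_0(t^*,x)=0$. It is continuous in $t$, Lipschitz in $x$ for each fixed $t$ with constant $|t-t^*|^{-2}$, and vanishes at $x=0$. Yet near the origin it behaves like $-x/(t^*-t)^2$, so $\dot x=P_0(t,x)$ has positive solutions that decay like $e^{-c/(t^*-t)}$ and reach $0$ exactly at $t^*$.

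Condition~1) must therefore be read as Lipschitz in $x$ uniformly on compact time intervals, as in Khalil and as the paper's own proof does with its $t$-independent $L_0$. This uniformity is an assumption, not a consequence of $\bar T<T$. With that reading your proof is complete.
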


\begin{proof}
Assume, by contradiction, that there exists a solution \(x(t)\) with
\(x(0)\neq0\) and a first convergence time $t_s := \inf\{t > 0 : x(t) = 0\} < T$.

Since \(t_s<T\), one can choose \(\varepsilon>0\) sufficiently small
such that $I   :=  [t_s-\varepsilon,t_s+\varepsilon]\cap(0,T)$
is a compact interval strictly separated from \(T\). For each
\(k=1,\ldots,N\), the function
    $t\mapsto\Phi_k(T-t)$
is continuous and nonzero on \(I\). Therefore, by compactness,
\begin{equation*}
    M_k
    :=
    \max_{t\in I}
    \left|
        \frac{1}{\Phi_k(T-t)}
    \right|
    <\infty.
\end{equation*}

Because  \(P_0\) and \(P_k\) are locally Lipschitz continuous, there exists a
neighborhood \(B_r=\{x\in\mathbb{R}^n:\|x\|\leq r\}\) of the origin such that $\|P_0(t,x)-P_0(t,y)\|   \leq  L_0\|x-y\|$, $\|P_k(x)-P_k(y)\|   \leq  L_k\|x-y\|, x,y\in B_r$.
Consequently, for every \(t\in I\) and \(x,y\in B_r\),
\begin{align}
    \|f(t,x)-f(t,y)\|
    &\le L_0\|x-y\|+
    \left\|  
        \sum_{k=1}^{N}
        \frac{P_k(x)-P_k(y)}
             {\Phi_k(T-t)}
    \right\|
    \nonumber\\
    &\leq    L_0\|x-y\|+
    \sum_{k=1}^{N}M_kL_k\|x-y\|
    \nonumber\\
    &=
    L\|x-y\|, \label{lip}
\end{align}
where $L:=L_0+\sum_{k=1}^{N}M_kL_k<\infty$. Thus, \(f(t,x)\) is continuous in  \(t\in I\) and locally Lipschitz
continuous in \(x\).

By the Picard--Lindelöf local existence and uniqueness theorem
\cite[Theorem~3.1]{Khalil2002}, the initial-value
problem
\begin{equation}
    \dot{x}=f(t,x),
   \quad
    x(t_s)=0,
    \label{eq:ivp-at-ts}
\end{equation}
admits a unique local solution through the point \((t_s,0)\).

Since \(P_k(0)=0\) for every \(k\), one has
\[
    f(t,0)
    = P_0(t,0)+
    \sum_{k=1}^{N}
    \frac{P_k(0)}{\Phi_k(T-t)}
    =0,
   \quad t<T.
\]
Hence, $x(t)\equiv0$  
is also a solution of \eqref{eq:ivp-at-ts}. 

This contradicts the assumption that there exists a solution \(x(t)\) with
\(x(0)\neq 0\) and a first convergence time $t_s := \inf\{t > 0 : x(t) = 0\} < T$.
 Consequently, no nonzero solution can reach the origin at any time \(t_s<T\).
\end{proof}

\begin{remark}
 The first and second conditions require the 
complete closed-loop vector field to satisfy $f(t,0)=0$ and be locally Lipschitz in $x$ on every compact
subinterval of $[0,T)$. This condition is vital to avoid early convergence. The third condition is actually a normal time-varying gain of prescribed-time control.
In addition, to obtain a unique equilibrium of the system \eqref{mainsystem}, one can 
set $P_k(x)=0$ if and only if $x=\mathbf{0}$.
\end{remark}

\subsection{Local homogeneous linear-growth condition}

Although the Lipschitz condition of \(P(x)\) is vital in the proof of Theorem \ref{maintheorem}, we emphasize that the loss of local Lipschitz continuity of \(P(x)\) only invalidates the uniqueness-based non-reachability proof. The Lipschitz condition is not sufficient to guarantee convergence before the prescribed time. 
 The following example shows that a non-Lipschitz \(P(x)\) also ensures an exact convergence time.

\begin{example}
Consider the scalar system 
\begin{equation*}
\dot{x}(t)=-\dfrac{x(t)\left[2+\sin\left(1/x(t)\right)\right]}{T-t},
\end{equation*}
where \(T>0\) and \(0\le t<T\), $\dot{x}(t)=0$ when $x(t)=0$.
Denote the state-dependent part as 
\begin{equation*}
g(x)=x\left[2+\sin\left(1/x\right)\right].
\end{equation*}
For every $x\neq0$,
\begin{equation*}
g'(x)
=
2+\sin\left(\frac{1}{x}\right)
-\frac{1}{x}\cos\left(\frac{1}{x}\right).
\end{equation*}
Let $x_n=\frac{1}{2n\pi}$. As $x_n\rightarrow0$, we have
\begin{equation*}
g'(x_n)=2-2n\pi\rightarrow-\infty.
\end{equation*}
Thus, $g'$ is unbounded in every neighborhood of the origin.
Consequently, $g$ is not locally Lipschitz at $x=0$.

Since $1\leq 2+\sin\left(\frac{1}{x}\right)\leq3$, one has $|x|\leq|g(x)|\leq3|x|$.
For any $x(0)\ne 0$, it is not difficult to obtain that the sign of $x(t)$ does not change when $t<T$.
We have:
\begin{equation*}
-\frac{3}{T-t}|x|
\leq\dot{|x|}
\leq-\frac{1}{T-t}|x|.
\end{equation*}
Its solution satisfies
\begin{equation*}
|x_0|
\left(\frac{T-t}{T}\right)^3
\leq |x(t)|
\leq
|x_0|
\left(\frac{T-t}{T}\right).
\end{equation*}
Therefore, $\lim_{t\to T^-}|x(t)|=0$, and $x(t)\neq0, \forall t<T$,
 despite the loss of local Lipschitz condition at the origin.
\end{example}

\begin{remark}
Although $x\left[2+\sin\left(1/x\right)\right]$
is not locally Lipschitz at $x=0$, it satisfies the homogeneous
linear-growth condition (see Assumption \ref{linear-growth}).
This condition  excludes convergence before $T$. 
The relationship between Lipschitz continuity and homogeneous
linear growth can be summarized as follows. If $P(t,0)=0$ and
$P(t,x)$ is locally Lipschitz continuous in $x$, then, in a
neighborhood of the origin, $P(t,x)$ has a local homogeneous
linear-growth bound. However, the converse is generally false. Therefore, the
homogeneous linear-growth condition provides a strictly more
general sufficient condition to exclude premature convergence.
\end{remark}

\begin{assumption}
\label{linear-growth}
For the system \eqref{mainsystem},
there exist a radius $r>0$, a nonnegative locally integrable function
$a_0(t) \in L_{\mathrm{loc}}^{1}([0,T))$, and constants
$a_k\geq0$, $k=1,\ldots,N$, such that
\begin{equation*}
\|P_0(t,x)\|
\leq a_0(t)\|x\|, \quad
\|P_k(x)\|
\leq a_k\|x\|,
\end{equation*}
for all $\|x\|\leq r$ and $t<T$.
\end{assumption}

Since $ a\|x\|_1
\leq \sqrt{n}\,a\|x\|_2$, for the system \eqref{mainsystem} with finite dimension $n$, Assumption \ref{linear-growth} is equal to the linear-growth condition \cite{zhouassumption}.

\begin{theorem}
\label{maintheorem2}
Consider the system \eqref{mainsystem}.
Suppose that Assumption~\ref{linear-growth} holds and
  $\Phi_k(T-t):[0,T)\to\mathbb{R}_{> 0}$ is continuous.
Then every absolutely continuous solution of
\eqref{mainsystem} with $x(0)\neq0$ satisfies
$x(t)\neq0,
0\leq t<T$.
Consequently, no nonzero trajectory can reach the origin before the
prescribed time $T$.
\end{theorem}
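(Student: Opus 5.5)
We argue by contradiction, combining the linear-growth bound with a Grönwall-type estimate run backward from the hitting time. Suppose some absolutely continuous solution with $x(0)\neq0$ vanishes somewhere in $[0,T)$. Let $t_s:=\inf\{t>0: x(t)=0\}$. By continuity $x(t_s)=0$, and $x(t)\neq0$ on $[0,t_s)$. We have $t_s>0$ because $x(0)\neq 0$, and $t_s<T$ by assumption.

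\emph{Localization.} By continuity of $x$ at $t_s$, choose $\delta\in(0,t_s)$ such that $\|x(t)\|\le r$ for all $t\in J:=[t_s-\delta,t_s]$, where $r$ is the radius from Assumption~\ref{linear-growth}. Since $J\subset[0,T)$ is compact and each $t\mapsto\Phi_k(T-t)$ is continuous and positive there, $M_k:=\max_{t\in J}1/\Phi_k(T-t)<\infty$, exactly as in the proof of Theorem~\ref{maintheorem}. Define
\[
a(t):=a_0(t)+\sum_{k=1}^{N}a_kM_k ,
\]
which is integrable on $J$ because $a_0\in L^1_{\mathrm{loc}}([0,T))$. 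Assumption~\ref{linear-growth} then gives $\|f(t,x(t))\|\le a(t)\|x(t)\|$ for almost every $t\in J$.

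\emph{Backward Grönwall estimate.} Work with $v(t):=\|x(t)\|^2$, which is absolutely continuous on $J$ because it is a composition of a smooth function with an absolutely continuous one. This choice avoids the nondifferentiability of $\|x\|$ at zero. Almost everywhere on $J$,
\[
|\dot v|=2|x^\top \dot x|\le 2a(t)v .
\]
In particular $\dot v\ge -2a(t)v$, so $\frac{d}{dt}\bigl(v(t)e^{\int_{t_s-\delta}^{t}2a(s)\,ds}\bigr)\ge0$ a.e. The product is absolutely continuous, since $e^{\int 2a}$ is absolutely continuous and bounded on $J$. Hence it is nondecreasing, which yields
\[
v(t_s)\ \ge\ v(t_s-\delta)\,\exp\Bigl(-2\int_{t_s-\delta}^{t_s}a(s)\,ds\Bigr)>0,
\]
because $x(t_s-\delta)\neq0$ and $\int_J a<\infty$. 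This contradicts $v(t_s)=0$. Therefore $x(t)\neq0$ on $[0,T)$.

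\emph{Main obstacle.} The delicate point is rigor in the Carathéodory setting rather than any computation. The argument needs three facts: the product rule for absolutely continuous functions, the statement that an absolutely continuous function with nonnegative a.e. derivative is nondecreasing, and integrability of $a$ on $J$. The last one requires $J$ to stay strictly away from $T$, since $a_0$ is only locally integrable and $1/\Phi_k$ may blow up at $T$. Choosing $\delta$ small enough that the trajectory stays in $B_r$ is also essential, because the growth bound holds only locally. Note that no uniqueness is used, which is why the result applies to every absolutely continuous solution rather than to a unique one.
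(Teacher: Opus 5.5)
Your proof is correct and follows the same route as the paper. Both argue by contradiction at the first hitting time $t_s$, localize to an interval before $t_s$ on which $\|x\|\le r$ and each $1/\Phi_k$ is bounded, and apply a Grönwall-type lower bound to $\|x\|^2$ (the paper uses $V=\frac12\|x\|^2$) to conclude $x(t_s)\neq 0$. Your version is slightly tidier: it works on the closed interval $[t_s-\delta,t_s]$ with the constant bounds $M_k$ used consistently, and it justifies absolute continuity of the integrating-factor product, whereas the paper passes to the limit $t\to t_s^-$.
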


\begin{proof}
Assume, by contradiction, that there exists a solution \(x(t)\) with
\(x(0)\neq0\) and a first convergence time $t_s := \inf\{t > 0 : x(t) = 0\} < T$.
Then $x(t)\neq 0, \forall 0\leq t<t_s $ and $x(t_s)=0$.

Since $x(t)$ is continuous and $x(t_s)=0$, there exists
$t_r\in[0,t_s)$ such that
$0<\|x(t)\|\leq r,
t\in[t_r,t_s)$.
As demonstrated in the proof of Theorem \ref{maintheorem},
denote
\begin{equation*}
    M_k
    :=
    \sup_{t\in[t_r,t_s)}
    \left|
        \frac{1}{\Phi_k(T-t)}
    \right|
    <\infty.
\end{equation*}

For every $\|x\|\leq r$, Assumption \ref{linear-growth} gives
\begin{align*}
\|f(t,x)\|
&\leq
\|P_0(t,x)\|
+
\sum_{k=1}^{N}
\frac{\|P_k(x)\|}{\Phi_k(T-t)}
\nonumber\\
&\leq
\left[
a_0(t)
+
\sum_{k=1}^{N} a_k M_k
\right]\|x\|.
\end{align*}

Consider the following Lyapunov function:
\begin{equation*}
    V(x)=\frac12  x^\top x =\frac12\|x\|^2.
\end{equation*}
For any absolutely continuous solution, and for almost every
$t\in[t_r,t_s)$, the derivative of the Lyapunov function satisfies
\begin{align*}
\dot V(t)
&=
x^\top(t)f(t,x(t))
\nonumber\\
&\geq
-\|x(t)\|\,\|f(t,x(t))\|
\nonumber\\
&\geq
-2\ell(t)V(t),
\end{align*}
where
\begin{equation*}
\ell(t)
:=
a_0(t)
+
\sum_{k=1}^{N}
\frac{a_k}{\Phi_k(T-t)}.
\end{equation*}
Hence, the Lyapunov function satisfies
\begin{equation*}
V(t)
\geq
V(t_r)
\exp\left(
-2\int_{t_r}^{t}\ell(\tau)\,d\tau
\right),
\quad
t\in[t_r,t_s).
\end{equation*}
Letting $t\to t_s^{-}$ and using the continuity of $V(x(t))$, we obtain
\begin{equation*}
V(t_s)
\geq
V(t_r)
\exp\left(
-2\int_{t_r}^{t_s}\ell(\tau)\,d\tau
\right)
>0.
\end{equation*}

However, $x(t_s)=0$ implies $V(t_s)=0$, which is a contradiction.
Therefore, $x(t)\neq0,
0\leq t<T$.
In other words, the trajectory cannot reach the origin before the prescribed time $T$.
\end{proof}

\begin{remark}
Assumption~\ref{linear-growth} can be satisfied by a broad
class of nonlinear functions that vanish at the origin.
For example, if
$P(x)= c x ^q,
 q\geq1$,
then $\|P(x)\| \leq cr^{q-1}\|x\|$, $\forall \|x\|\leq r$.
Other commonly encountered nonlinearities also satisfy this
condition in a neighborhood of the origin, e.g., 
$\tan x, \frac{x}{1+|x|}$.

\end{remark}

\subsection{Osgood condition}

The following example shows that a nonlinear growth term $P (x)$
also ensures an exact
convergence time.

\begin{example} \label{exa4}
     Consider the scalar system
\begin{equation*}
    \dot{x}
    =
    -\frac{P(x)}{T-t},
    \quad 0\le t<T, \quad 0 \le |x|<1,
    \label{eq:nonlip_example}
\end{equation*}
where $P(x)=-x\ln\frac{1}{|x|}$ when  $0<|x|<1$, and $P(0)=0$.
The function \(P(x)\) is continuous at the origin, since
\begin{equation*}
    \lim_{x\to0}x\ln\frac{1}{|x|}=0.
\end{equation*}
However, \(P(x)\) is not locally Lipschitz at \(x=0\), and is  not even a homogeneous
linear-growth function, since
\begin{equation*}
    \lim_{x\to0}\ln\frac{1}{|x|}=+\infty.
\end{equation*}
Notice that the sign of \(x(t)\) does not change. Let $v=\ln\frac{1}{|x|}$, it is not difficult to obtain $\frac{\dot v}{v}=\frac{1}{T-t}$. Solving the differential equation, we have
\begin{equation*}
    |x(t)|=
    |x(0)|^{\frac{T}{T-t}}, \quad  0<x(0)<1.
\end{equation*}
Hence, $\lim_{t\to T^-}|x(t)|=0$, and $x(t)\neq0, \forall t<T$.
\end{example}

\begin{definition} \label{osgooddef}
Let $r>0$. A continuous and nondecreasing function $\omega:[0,r]\rightarrow\mathbb{R}_{\geq0}$
is called an Osgood function at the origin if
\begin{enumerate}
    \item  $\omega(0)=0,
\quad
\omega(s)>0,\quad s\in(0,r]$.
\item $\int_{0^+}^{r}\frac{ds}{\omega(s)}
:=\lim_{\varepsilon\to0^+}
\int_{\varepsilon}^{r}\frac{ds}{\omega(s)}
=+\infty$.
\end{enumerate}
\end{definition}

According to  Definition \ref{osgooddef}, choose
$r=e^{-1}$ and define the function
\[
\omega(s)=
\begin{cases}
s\ln\dfrac{1}{s}, & s\in(0,r],\\[1mm]
0, & s=0.
\end{cases}
\]
Since $\lim_{s\to0^+}s\ln\frac{1}{s}=0$,
the function $\omega$ is continuous on $[0,r]$. Moreover, for
$s\in(0,r]$, $\omega'(s)=\ln(1/s)-1\geq 0$.
Furthermore,
\[
\int_{0^+}^{r}\frac{\mathrm{d}s}{\omega(s)}
=
\lim_{\varepsilon\to0^+}
\int_{\varepsilon}^{r}
\frac{\mathrm{d}s}{s\ln(1/s)}
=+\infty.
\]
Therefore, $\omega$ is an Osgood function at the origin. 
In Example \ref{exa4}, we have 
$\|P(x)\|\leq \omega(\|x\|), \forall \|x\|\leq r$.

\begin{assumption}
\label{osgood}
Consider system \eqref{mainsystem}.
There exist a radius $r>0$, an Osgood function
$\omega:[0,r]\rightarrow \mathbb{R}_{\geq 0}$,
a nonnegative function $a_0\in L_{\mathrm{loc}}^1([0,T))$,
and constants $a_k\geq0$, $k=1,\ldots,N$, such that the following conditions hold:
\begin{enumerate}
    \item $\|P_0(t,x)\|\leq a_0(t)\omega(\|x\|)$ holds for all $\|x\|\leq r$ and almost every
$t\in[0,T)$.
   \item  $\|P_k(x)\|\leq a_k\omega(\|x\|), \forall \|x\|\leq r$.
\end{enumerate}
\end{assumption}

\begin{theorem}
\label{maintheorem3}
Suppose that Assumption~\ref{osgood} holds, and 
  $\Phi_k(T-t):[0,T)\to \mathbb{R}_{> 0}$ is continuous.
Then every absolutely continuous solution of
\eqref{mainsystem} with $x(0)\neq 0$ satisfies
$x(t)\neq0,
0\leq t<T$.
Consequently, no nonzero trajectory can reach the origin before
the prescribed time $T$.
\end{theorem}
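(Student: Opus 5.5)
We argue by contradiction, following the proof of Theorem~\ref{maintheorem2} but replacing the exponential (Gr\"onwall) lower bound with an Osgood-type comparison. Suppose some absolutely continuous solution with $x(0)\neq0$ has first hitting time $t_s:=\inf\{t>0:x(t)=0\}<T$. Then $x(t)\neq0$ on $[0,t_s)$ and $x(t_s)=0$. By continuity there is $t_r\in[0,t_s)$ with $0<\|x(t)\|\le r$ on $[t_r,t_s)$. Since $[t_r,t_s]$ is compact and strictly separated from $T$, each $t\mapsto 1/\Phi_k(T-t)$ is continuous there, so $M_k:=\max_{[t_r,t_s]}1/\Phi_k(T-t)<\infty$. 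Set
\[
\ell(t):=a_0(t)+\sum_{k=1}^N a_kM_k ,
\]
which is integrable on $[t_r,t_s]$ because $a_0\in L^1_{\mathrm{loc}}([0,T))$. Assumption~\ref{osgood} then gives $\|f(t,x(t))\|\le \ell(t)\,\omega(\|x(t)\|)$ for almost every $t\in[t_r,t_s)$.

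Next we work with $\rho(t):=\|x(t)\|$, which is positive and absolutely continuous on $[t_r,t_s)$. Since $x\neq0$ there, $\rho=\sqrt{x^\top x}$ is a composition of an absolutely continuous function with a map that is smooth away from the origin. Hence for almost every $t$,
\[
\dot\rho=\frac{x^\top f(t,x)}{\|x\|}\ge-\|f(t,x)\|\ge-\ell(t)\,\omega(\rho).
\]
Define the Osgood primitive
\[
\Omega(s):=\int_s^{r}\frac{d\sigma}{\omega(\sigma)},\qquad s\in(0,r].
\]
It is finite, nonincreasing and $C^1$ on $(0,r]$, because $\omega$ is continuous and positive there. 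By Definition~\ref{osgooddef}, $\Omega(s)\to+\infty$ as $s\to0^+$. On any $[t_r,\tau]$ with $\tau<t_s$, $\rho$ stays in a compact subset of $(0,r]$, so $\Omega\circ\rho$ is absolutely continuous by the chain rule for a $C^1$ function of an absolutely continuous one. This yields
\[
\frac{d}{dt}\Omega(\rho(t))=-\frac{\dot\rho(t)}{\omega(\rho(t))}\le \ell(t)\quad\text{a.e.}
\]

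Integrating from $t_r$ to $t<t_s$ gives
\[
\Omega(\rho(t))\le \Omega(\rho(t_r))+\int_{t_r}^{t_s}\ell(\tau)\,d\tau<\infty,
\]
uniformly in $t\in[t_r,t_s)$. But $\rho(t)\to\rho(t_s)=0$ as $t\to t_s^-$, so $\Omega(\rho(t))\to+\infty$, which is a contradiction. Hence $x(t)\neq0$ for all $t\in[0,T)$.

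The main obstacle is making the differential inequality rigorous in the Carath\'eodory setting: we need $\rho$ and $\Omega\circ\rho$ to be absolutely continuous on each compact subinterval before $t_s$, and the chain rule to hold almost everywhere. The fact that $\rho$ is bounded away from zero on each $[t_r,\tau]$ is exactly what secures this. A second subtle point is that the uniform bound $\int_{t_r}^{t_s}\ell<\infty$ requires $t_s<T$, which is where the continuity of $\Phi_k$ on $[0,T)$ enters. Note that the monotonicity of $\omega$ is not even needed in this argument.
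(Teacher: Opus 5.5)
Your proposal is correct and follows essentially the same route as the paper's proof. Both argue by contradiction at the first hitting time $t_s<T$, localize to $\|x\|\le r$ on $[t_r,t_s)$, derive the radial inequality $\dot\rho\ge-\ell(t)\,\omega(\rho)$, and play the divergent Osgood integral against the finite $\int_{t_r}^{t_s}\ell$. Replacing $1/\Phi_k(T-t)$ by its maximum $M_k$ and integrating through the primitive $\Omega$ with an explicit chain-rule justification are only a more careful rendering of the paper's integration step. You are also right that the monotonicity of $\omega$ is never used.
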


\begin{proof}
Assume, by contradiction, that there exists a solution \(x(t)\) with
\(x(0)\neq0\) and a first convergence time $t_s := \inf\{t > 0 : x(t) = 0\} < T$.
Then $x(t)\neq 0, \forall 0\leq t<t_s $ and $x(t_s)=0$.

For every absolutely continuous solution, there exists
$t_0<t_s$ such that $0<\|x(t)\|\leq r,
t\in[t_0,t_s).$
Define
\begin{equation*}
\ell(t)
:=
a_0(t)
+
\sum_{k=1}^{N}
\frac{a_k}{\Phi_k(T-t)}.
\end{equation*}
Since $\Phi_k(T-t)$ is continuous and strictly positive on the
compact interval $[t_0,t_s]$,  one has $\ell(t)\in L^1([t_0,t_s])$.

For $\|x\|\leq r$, Assumption~\ref{osgood} gives
\begin{align*}
\|\dot{x}(t)\|
&\leq
\|P_0(t,x)\|
+
\sum_{k=1}^{N}
\frac{\|P_k(x)\|}{\Phi_k(T-t)}
\nonumber\\
&\leq
\left[
a_0(t)
+
\sum_{k=1}^{N}
\frac{a_k}{\Phi_k(T-t)}
\right]\omega(\|x\|)
\nonumber\\
&=
\ell(t)\omega(\|x\|).
\label{eq:full-osgood-bound}
\end{align*}

Let $\rho(t):=\|x(t)\|=\sqrt{x^\top(t)x(t)}$.
For almost every $t\in[t_0,t_s)$, we have
\begin{align}
\dot{\rho}(t)
=\frac{x^\top(t)\dot{x}(t)}{\|x(t)\|}
\nonumber\geq
\frac{-\|x^\top(t)\|\|\dot{x}(t)\|}{\|x(t)\|}
\nonumber\geq
-\ell(t)\omega(\rho(t)).
\end{align}

Consequently, for almost every $t\in[t_0,t_s)$,
\begin{equation*}
-\frac{\dot{\rho}(t)}{\omega(\rho(t))}
\leq
\ell(t).
\end{equation*}
Integrating from $t_0$ to $t<t_s$ yields
\begin{equation}
\int_{\rho(t)}^{\rho(t_0)}
\frac{ds}{\omega(s)}
\leq
\int_{t_0}^{t}\ell(\tau)\,d\tau.
\label{eq:osgood-comparison}
\end{equation}

When $t\to t_s^-$, the right-hand side remains finite because
$\ell\in L^1([t_0,t_s])$. However, $\rho(t)\to\rho(t_s)=0$
and the Osgood condition implies
\[
\int_{\rho(t)}^{\rho(t_0)}
\frac{ds}{\omega(s)}
\to+\infty.
\]
This contradicts \eqref{eq:osgood-comparison}. Therefore, no
nonzero trajectory can reach the origin at any time $t_s<T$.
\end{proof}

\begin{remark}
Theorems \ref{maintheorem}-\ref{maintheorem3} provide a hierarchy of sufficient conditions for excluding premature convergence.
Indeed, local Lipschitz continuity together with ($f(t,0)=0$) implies a local homogeneous linear-growth bound, while the linear-growth condition is a particular Osgood condition obtained by choosing ($\omega(s)=s$). Therefore, among the three conditions considered above, the Osgood condition provides the most general sufficient criterion for excluding convergence before the prescribed time. However, its
application requires the construction of a continuous and
nondecreasing modulus $\omega$ that dominates the nonlinear terms,
together with the verification of the Osgood integral condition.
Consequently, the applicability of Theorem~3 may be less
straightforward in practice.
\end{remark}

\begin{remark}
The most commonly encountered non-Osgood moduli are sublinear
fractional-power functions. For example, let $ \omega(s)=c s^{\alpha},
 c>0, 0<\alpha<1$.
Then
\[
\begin{aligned}
    \int_{0^+}^{r}\frac{\mathrm{d}s}{\omega(s)}
= \frac{1}{c}\int_{0^+}^{r}s^{-\alpha}\,\mathrm{d}s  
= \frac{r^{1-\alpha}}{c(1-\alpha)}
    <+\infty.
\end{aligned}
\]
Therefore, a fractional-power modulus does not satisfy the Osgood
condition. However, the product of non-Osgood individual functions can satisfy a linear-growth bound, e.g.,
\[
\left|x_1^{\frac{2}{3}}x_2^{\frac{1}{3}}\right|
=\left|x_1^{\frac{1}{3}}x_1^{\frac{1}{3}}x_2^{\frac{1}{3}}\right|
\leq
\frac{|x_1|+|x_1|+|x_2|}{3}.
\]
Non-Osgood behavior is not restricted to fractional-power functions.
For example,
\[
    \omega_1(s)
    =
    s\left(\ln\frac{1}{s}\right)^p,
    \quad p>1.
\]
It is continuous, positive, and nondecreasing in a sufficiently small neighborhood of
the origin, and 
$\omega_1(0)=0$. However, for sufficiently small $r$
\[
\begin{aligned}
    \int_{0^+}^{r}\frac{\mathrm{d}s}{\omega_1(s)}
=
    \int_{\ln(1/r)}^{+\infty}\frac{\mathrm{d}u}{u^p}
=
    \frac{\left(\ln\frac{1}{r}\right)^{1-p}}{p-1}
    <+\infty.
\end{aligned}
\]
Hence, $\omega_1(s)$ is not an Osgood function. 
\end{remark}

Nevertheless, failure of the Osgood condition
alone does not necessarily imply that premature convergence occurs.

\begin{example} \label{ex4}
Consider the following system:
\[
    \dot{x}
    =
    -\frac{x}{T-t}
    +
    \frac{Jx}
    {(T-t)\|x\|^{1-\alpha}},
    \quad x\neq0,\quad 0\leq t<T,
\]
with $\dot{x}=0$ at $x=\mathbf{0}$,
and $J=
    \begin{bmatrix}
        0&-1\\
        1&0
    \end{bmatrix},
 0<\alpha<1.$

Let $r(t)=\|x(t)\|$. Since 
$x^\top Jx=0$, we have
\[
\begin{aligned}
    \dot r
  =
    \frac{x^\top\dot x}{\|x\|}
=
    -\frac{r}{T-t}.
\end{aligned}
\]
Its solution is 
$ r(t)  =   r(0)(T-t)/T$. Therefore, for every nonzero initial condition, we have $x(t)\neq0, \forall t\in [0,T) $, and
$\lim_{t\to T^-}x(t)=0.$
Hence, the convergence time is exactly $T$.

On the other hand, since $x$ and $Jx$ are orthogonal,
\[
    \|\dot{x}\|
    =
    \frac{
        \sqrt{\|x\|^2+\|x\|^{2\alpha}}
    }{T-t}
    \geq
    \frac{\|x\|^\alpha}{T-t}.
\]
Although $\|x\|^\alpha$ cannot be limited by the Osgood function,
the system does not reach the origin before $T$. Therefore, the
Osgood condition is sufficient but not necessary for excluding
premature convergence in multidimensional systems.
\end{example}

\begin{remark}
Although the vector field in Example \ref{ex4} contains a non-Osgood term, this term is orthogonal to $x$ and affects only the angular motion. The radial dynamics remain $\dot r=-r/(T-t)$,
so that $r(t)>0$, $\forall t\in [0,T)$,  and $\lim_{t\to T^-}r(t)=0$. A general necessary and sufficient condition would need to characterize the exact radial component of the vector field along all trajectories, which is typically as difficult as directly analyzing the original system. In this paper, necessary and sufficient conditions that exclude
premature convergence are not considered further.
\end{remark}

\subsection{Extension to multi-agent systems}

Consider $N$ agents with states $x_i\in\mathbb{R}^n$, and define the
stacked state $X=\operatorname{col}(x_1,\ldots,x_N)\in\mathbb{R}^{Nn}.$
In multi-agent systems, the main objective is that the disagreement vector converges to the origin.
Suppose, for example, that the communication topology is described by a fixed undirected connected
graph with Laplacian matrix
$\mathcal{L}\in\mathbb{R}^{N\times N}$.
The Laplacian disagreement vector is defined by
\[
e=(\mathcal{L}\otimes I_n)X
=
\operatorname{col}\left(
\sum_{j=1}^{N}a_{ij}(x_i-x_j)
\right)_{i=1}^{N}.
\]

Suppose that the closed-loop dynamics induce the following closed
disagreement system:
\begin{equation}
\dot e
=
P_0(t,e)
+
\sum_{k=1}^{M}
\frac{P_k(e)}{\Phi_k(T-t)},
\quad 0\leq t<T,
\label{eq:mas_error_system}
\end{equation}
where each $\Phi_k(T-t)$ is continuous and strictly positive for
$t<T$. The set $e=0$, equivalently the consensus manifold
$\mathcal{C}$, is assumed to be positively invariant.

\begin{theorem} \label{theo4}
Consider the disagreement system \eqref{eq:mas_error_system}.  $\Phi_k(T-t):[0,T)\to \mathbb{R}_{> 0}$ is continuous.
Suppose that there exist $r>0$, an Osgood function
$\omega:[0,r]\to \mathbb{R}_{\geq 0}$, a nonnegative function
$a_0\in L_{\mathrm{loc}}^1([0,T))$, and constants $a_k\geq0$ such that the following conditions hold for all $\|e\|\leq r$:
\[
\|P_0(t,e)\|
\leq
a_0(t)\omega(\|e\|),
\]
\[
\|P_k(e)\|
\leq
a_k\omega(\|e\|),
\quad k=1,\ldots,M.
\]
 Then every absolutely continuous solution with $e(0)\neq0$ satisfies $e(t)\neq0$ for all $0\leq t<T$.
Consequently, no initially non-consensual trajectory can reach the
consensus manifold before the prescribed time $T$.
\end{theorem}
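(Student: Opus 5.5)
The plan is to reduce Theorem~\ref{theo4} directly to Theorem~\ref{maintheorem3}. The disagreement system \eqref{eq:mas_error_system} has exactly the structure of \eqref{mainsystem}, with state $e\in\mathbb{R}^{Nn}$ in place of $x\in\mathbb{R}^n$ and $M$ in place of $N$. The hypotheses imposed on $P_0$, $P_k$ and $\Phi_k$ are precisely those of Assumption~\ref{osgood}, now on the ball $\|e\|\le r$. Since the proof of Theorem~\ref{maintheorem3} never uses the dimension or any property of the state other than its Euclidean norm, it applies verbatim. For completeness, and because the multi-agent setting is only assumed to admit absolutely continuous solutions, I would nevertheless reproduce the argument briefly in the $e$ variables.

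First, I argue by contradiction. Suppose some absolutely continuous solution with $e(0)\neq0$ has first hitting time $t_s:=\inf\{t>0:e(t)=0\}<T$. By continuity, $e(t_s)=0$ and $e(t)\ne0$ on $[0,t_s)$. Hence there is $t_0<t_s$ with $0<\|e(t)\|\le r$ on $[t_0,t_s)$.

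Next, define $\ell(t):=a_0(t)+\sum_{k=1}^M a_k/\Phi_k(T-t)$. On the compact interval $[t_0,t_s]\subset[0,T)$, each $\Phi_k(T-t)$ is continuous and positive, so it is bounded below by a positive constant. Together with $a_0\in L^1_{\mathrm{loc}}([0,T))$, this gives $\ell\in L^1([t_0,t_s])$. The growth bounds then yield $\|\dot e(t)\|\le \ell(t)\,\omega(\|e(t)\|)$ for almost every $t\in[t_0,t_s)$.

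Then, set $\rho=\|e\|$. This function is absolutely continuous and positive on $[t_0,t_s)$. Cauchy--Schwarz gives $\dot\rho\ge-\ell\,\omega(\rho)$ almost everywhere.

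The step needing the most care is integrating $-\dot\rho/\omega(\rho)\le\ell$. To justify it, I would introduce $W(s):=\int_s^{r}d\sigma/\omega(\sigma)$, which is $C^1$ on $(0,r]$ because $\omega$ is continuous and positive there. Then $W\circ\rho$ is absolutely continuous on each $[t_0,t]$ with $t<t_s$, since $\rho$ stays in the compact set $[\min\rho,r]\subset(0,r]$ on that interval. The chain rule then gives
\[
W(\rho(t))-W(\rho(t_0))\le\int_{t_0}^{t}\ell(\tau)\,d\tau .
\]
Letting $t\to t_s^-$, we have $\rho(t)\to0$, so $W(\rho(t))\to+\infty$ by the Osgood condition, while the right-hand side stays bounded by $\|\ell\|_{L^1([t_0,t_s])}$. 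This is a contradiction.

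It follows that $e(t)\neq0$ on $[0,T)$. Since the graph is connected and undirected, $\ker(\mathcal{L}\otimes I_n)$ is exactly the consensus manifold $\mathcal{C}$, so $e=0$ is equivalent to $X\in\mathcal{C}$. Therefore no initially non-consensual trajectory can reach $\mathcal{C}$ before $T$.
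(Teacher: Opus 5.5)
Your proposal is correct and follows the paper's proof essentially step for step: contradiction via the first hitting time $t_s$, localization to $\|e\|\le r$ on $[t_0,t_s)$, integrability of $\ell$ on $[t_0,t_s]$, the inequality $\dot\rho\ge-\ell\,\omega(\rho)$ for $\rho=\|e\|$, and the divergence of the Osgood integral as $\rho\to0$. Your two additions only tighten it: the primitive $W(s)=\int_s^{r}d\sigma/\omega(\sigma)$ with the chain rule for absolutely continuous functions justifies the integration step that the paper asserts without argument, and the $\ker(\mathcal{L}\otimes I_n)$ remark proves the equivalence between $e=0$ and consensus that the paper simply assumes.
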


\begin{proof}
Suppose, by contradiction, that a solution with $e(0)\neq0$ reaches
consensus for the first time at
\[
t_s:=\inf\{t>0:e(t)=0\}<T.
\]
Then
\[
e(t)\neq0,\quad 0\leq t<t_s,
\quad
e(t_s)=0.
\]
By continuity, there exists $t_r<t_s$ such that
\[
0<\|e(t)\|\leq r,\quad t\in[t_r,t_s).
\]
Define
\[
\rho(t):=\|e(t)\|,
\quad
\ell(t):=
a_0(t)+
\sum_{k=1}^{M}\frac{a_k}{\Phi_k(T-t)}.
\]
Since $t_s<T$ and each $\Phi_k(T-t)$ is continuous and strictly
positive, one has
\[
\ell\in L^1([t_r,t_s]).
\]

For almost every $t\in[t_r,t_s)$, the assumed growth bounds yield
\[
\|\dot e(t)\|
\leq
\|P_0(t,e(t))\|
+
\sum_{k=1}^{M}
\frac{\|P_k(e(t))\|}{\Phi_k(T-t)}
\leq
\ell(t)\omega(\rho(t)).
\]
Since $\rho(t)>0$ on $[t_r,t_s)$, it follows for almost every
$t\in[t_r,t_s)$ that
\[
\dot\rho(t)
=
\frac{e^\top(t)\dot e(t)}{\|e(t)\|}
\geq
-\|\dot e(t)\|
\geq
-\ell(t)\omega(\rho(t)).
\]
Integrating this differential inequality gives
\begin{equation}
\int_{\rho(t)}^{\rho(t_r)}
\frac{\mathrm{d}s}{\omega(s)}
\leq
\int_{t_r}^{t}\ell(\tau)\,\mathrm{d}\tau,
\quad t<t_s.
\label{eq:mas_osgood_integral}
\end{equation}

As $t\to t_s^-$, the right-hand side of
\eqref{eq:mas_osgood_integral} remains finite because
$\ell\in L^1([t_r,t_s])$. However, $\rho(t)\to0$, and the Osgood
property of $\omega$ implies
\[
\int_{\rho(t)}^{\rho(t_r)}
\frac{\mathrm{d}s}{\omega(s)}
\longrightarrow+\infty.
\]
This contradiction proves that $e(t)\neq0$ for every $t<T$.
\end{proof}

\begin{remark}
Choosing $\omega(s)=s$ reduces the Osgood condition to the local
homogeneous linear-growth condition
\[
\|P_0(t,e)\|
\leq
a_0(t)\|e\|,
\quad
\|P_k(e)\|
\leq
a_k\|e\|,
\quad k=1,\ldots,M.
\]
Moreover, suppose that $P_0(t,0)=0$ and $P_k(0)=0$, and that the error
vector fields are locally Lipschitz at $e=0$, i.e.,
\[
\|P_0(t,e)-P_0(t,\bar e)\|
\leq
L_0(t)\|e-\bar e\|,
\quad
L_0\in L_{\mathrm{loc}}^1([0,T)),
\]
\[
\|P_k(e)-P_k(\bar e)\|
\leq
L_k\|e-\bar e\|,
\quad k=1,\ldots,M.
\]
Setting $\bar e=0$ gives the corresponding local homogeneous
linear-growth bounds. Therefore, local Lipschitz continuity, local
homogeneous linear growth, and the Osgood condition form a hierarchy
of sufficient conditions for excluding premature consensus.
\end{remark}

\section{Prescribed-time control systems analysis}
In this section, several prescribed-time control systems are shown to converge to zero exactly at the prescribed time using the proposed theorems, demonstrating a high degree of generality.

\subsection{Prescribed-time stabilization with backstepping}

In this subsection, we apply Theorem~\ref{maintheorem} to a backstepping-based prescribed-time control system \cite{duibiao}. 

Consider the following $n$th-order system:
\begin{equation}  	
	\begin{cases} 
		\dot x_j=x_{j+1}+\theta^\top f_j(\bar{x}_j),\, j=1,2,...,n-1,\\
		\dot x_n=u+\theta^\top f_n(\bar{x}_n).
	\end{cases} \label{scalar}
\end{equation}
where $x_j\in\mathbb{R}$, $j=1,\ldots,n$, $u\in\mathbb{R}$,
and $\theta\in\mathbb{R}^r$ are the state components, control input,
and uncertain parameter vector, respectively.
$\bar{x}_j
    =
    \begin{bmatrix}
        x_1, & \cdots & ,x_j
    \end{bmatrix}^{\mathrm T}
    \in\mathbb{R}^j,
    x(t)=\bar{x}_n(t)$.
The mappings
$f_j:\mathbb{R}^j\rightarrow\mathbb{R}^r$
are known smooth functions satisfying $f_j(0)=0,j=1,\ldots,n$.

The controller in \cite{duibiao} is designed by backstepping. Consider the following transformation:
\begin{equation}  	
	\begin{cases}z_1 = x_1,\\
		z_j = x_j-\alpha_{j-1},\, j=2,3,...,n,
	\end{cases} \label{zsystems}
\end{equation}
where $\alpha_j$ is the virtual control to be designed.

The virtual control $\alpha_1$ is designed as:
\begin{equation*} \label{alpha1s}
	\alpha_1=- {\hat{\theta}}^\top f_1  -\frac{k_1 z_1}{T-t},
\end{equation*}
where $k_j>n-j+1$, $\forall j=1,...,n$, $\hat{\theta}$ denotes the estimate of $\theta$. Its update law is designed in the last step.

Let $\tau_1=z_1 f_1$ and $\tau_j=\tau_{j-1}+z_j \big(f_j-\sum_{\iota=1}^{j-1}\frac{\partial {\alpha _{j-1}}}{\partial {x_\iota}} f_\iota \big)$.
The virtual control $\alpha_2$ is
\begin{equation*} 
	\begin{split}
		\alpha_2=& -x_1 -\frac{k_2 x_2}{T-t}+\frac{k_2  \alpha_1}{T-t}	-{\hat{\theta}}^\top f_2 \\
		& +\left[\frac{\partial {\alpha _1}}{\partial {t}}+   \frac{\partial {\alpha _1}}{\partial {{\hat{\theta}}^\top}} \tau_2 + \frac{\partial {\alpha _1}}{\partial {x_1}} (x_2+\hat{ \theta}^\top f_1 ) \right].
	\end{split} 
\end{equation*}

The virtual control $\alpha_j$ is designed as:
\begin{eqnarray*}
	\alpha_j&=& \alpha_{j-2} -x_{j-1}-\frac{k_j  x_j}{T-t}+\frac{k_j \alpha_{j-1}}{T-t}	-{\hat{\theta}}^\top f_j \notag\\
	&&+ \left[\frac{\partial {\alpha _{j-1}}}{\partial {t}}+  \frac{\partial {\alpha _{j-1}}}{\partial {{\hat{\theta}}^\top}} \tau_j + \sum_{\iota=1}^{j-1}  \frac{\partial {\alpha _{j-1}}}{\partial {x_\iota}} (x_{\iota+1}+  {\hat{\theta}}^\top f_\iota ) \right]  \notag\\
	&&+   \left(\sum_{\iota=2}^{j-1} \frac{\partial {\alpha_{\iota-1}}}{\partial {{\hat{\theta}}^\top}} z_\iota \right)  \left(f_j-\sum_{\iota=1}^{j-1} \frac{\partial {\alpha _{j-1}}}{\partial {x_\iota}} f_\iota \right)  .
%	\label{alphajs}
\end{eqnarray*}

The control   input $u$ is designed as:
\begin{eqnarray} 
	u&=& \alpha_{n-2} -x_{n-1}-\frac{k_n  x_n}{T-t}+\frac{k_n \alpha_{n-1}}{T-t}	-{\hat{\theta}}^\top f_n \notag\\
	&&+ \left[\frac{\partial {\alpha _{n-1}}}{\partial {t}}+  \frac{\partial {\alpha _{n-1}}}{\partial {{\hat{\theta}}^\top}} \tau_n + \sum_{\iota=1}^{n-1} \frac{\partial {\alpha _{n-1}}}{\partial {x_\iota}} (x_{\iota+1}+  {\hat{\theta}}^\top f_\iota ) \right] \notag\\
	&&+  \left(\sum_{\iota=2}^{n-1} \frac{\partial {\alpha_{\iota-1}}}{\partial {{\hat{\theta}}^\top}} z_\iota \right)   \left(f_n-\sum_{\iota=1}^{n-1} \frac{\partial {\alpha _{n-1}}}{\partial {x_\iota}} f_\iota \right).
	%\label{controlvs}
\end{eqnarray}

Moreover, the update law of $\hat{\theta}$ is
\begin{equation} 
	\dot{\hat{\theta}}= \tau_n=\tau_{n-1}+z_n \big(f_n-\sum_{\iota=1}^{n-1}\frac{\partial {\alpha _{n-1}}}{\partial {x_\iota}} f_\iota \big), \, \hat{\theta}(t=0)=0. 
\end{equation}

For $t  \ge T$, let $u=0,	\dot{\hat{\theta}}=0$.
\begin{corollary}
Consider system~\eqref{scalar} under the 
backstepping controller and adaptive law given above. Suppose that
$f_j$, $j=1,\ldots,n$, are smooth and satisfy $f_j(0)=0$. Then, for
every initial condition $x(0)\neq 0$, the corresponding closed-loop
trajectory satisfies $x(t)\neq 0, \forall\,t\in[0,T)$.
\end{corollary}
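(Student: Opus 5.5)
We intend to apply the uniqueness argument of Theorem~\ref{maintheorem}. It cannot be quoted verbatim, because the closed-loop state is the pair $(x,\hat\theta)$, and $\hat\theta$ need not be zero when $x$ reaches the origin. So we will work with the augmented system
\[
\dot x=F(t,x,\hat\theta),\qquad \dot{\hat\theta}=\tau_n(t,x,\hat\theta),\qquad 0\le t<T .
\]
Here $F$ collects \eqref{scalar} with $u$ substituted. We then show that the set $\{x=0\}$ consists of equilibria of this system, and that the vector field is locally Lipschitz in $(x,\hat\theta)$ on every compact subinterval of $[0,T)$. Given these two facts, the contradiction argument of Theorem~\ref{maintheorem} goes through after a shift of the $\hat\theta$-coordinate.

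\textbf{Step 1 (regularity).} Each $\alpha_j$, $u$ and $\tau_n$ is built by finitely many sums, products and partial derivatives from three ingredients: the smooth functions $f_j$, the components of $\hat\theta$, and the factor $1/(T-t)$. Since $f_j$ is smooth and $1/(T-t)$ is smooth in $t$ on $[0,T)$, the closed-loop right-hand side is $C^1$ in $(t,x,\hat\theta)$ on $[0,\bar T]\times\mathbb{R}^n\times\mathbb{R}^r$ for every $\bar T<T$. In particular it is locally Lipschitz in $(x,\hat\theta)$, uniformly in $t$ on compact subintervals, which is the analogue of \eqref{lip}.

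\textbf{Step 2 (vanishing on $\{x=0\}$).} We prove by induction on $j$ that $\alpha_j(t,0,\hat\theta)=0$ for all $t<T$ and all $\hat\theta$. The base case holds because $\alpha_1=-\hat\theta^\top f_1(x_1)-k_1x_1/(T-t)$ and $f_1(0)=0$. For the inductive step, suppose $\alpha_{j-1}(t,0,\hat\theta)\equiv0$ identically in $(t,\hat\theta)$. Then $\partial\alpha_{j-1}/\partial t$ and $\partial\alpha_{j-1}/\partial\hat\theta$ also vanish at $x=0$. Moreover $z_\iota=0$ at $x=0$ for every $\iota$, so each $\tau_j$ vanishes there. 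The remaining terms contain one of the factors $x_\iota$, $\alpha_{j-2}$, $\alpha_{j-1}$, $f_\iota$ or $z_\iota$, and each of these is zero at $x=0$. The same reasoning applies to $u$ and $\tau_n$. Consequently $F(t,0,\hat\theta)=0$ and $\tau_n(t,0,\hat\theta)=0$ for all $t<T$ and all $\hat\theta$.

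\textbf{Step 3 (contradiction).} Suppose a trajectory with $x(0)\ne0$ has a first hitting time $t_s<T$, and set $\theta^\ast:=\hat\theta(t_s)$. By Step 2, the constant function $(x,\hat\theta)\equiv(0,\theta^\ast)$ solves the closed-loop system. It passes through the same point $(0,\theta^\ast)$ at $t=t_s$ as the given trajectory. By Step 1 and the Picard--Lindel\"of theorem, applied on a compact interval around $t_s$ separated from $T$ exactly as in the proof of Theorem~\ref{maintheorem}, the solution through this point is unique both forward and backward in time. Hence $x(t)\equiv0$ on a neighborhood of $t_s$. Repeating the argument, or using uniqueness on $[0,t_s]$, gives $x(0)=0$, which is a contradiction. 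Equivalently, one can apply Theorem~\ref{maintheorem} directly in the shifted coordinates $(x,\hat\theta-\theta^\ast)$, whose origin is then an equilibrium.

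The main obstacle is Step 2. The recursive expressions for $\alpha_j$ contain terms such as $\partial\alpha_{j-1}/\partial t$ and $\partial\alpha_{j-1}/\partial\hat\theta^\top$, and these vanish at $x=0$ only because $\alpha_{j-1}$ vanishes identically in $(t,\hat\theta)$ on $\{x=0\}$. For this reason the induction hypothesis must be stated in that strong form, not merely pointwise, so that differentiating it in $t$ and $\hat\theta$ is legitimate.
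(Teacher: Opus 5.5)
Your proposal is correct and follows essentially the same route as the paper. Both arguments augment the state with $\hat\theta$, show that every point of $\{x=0\}$ is an equilibrium and that the closed-loop vector field is locally Lipschitz away from $T$, and then run the uniqueness argument of Theorem~\ref{maintheorem} in the shifted coordinates $(x,\hat\theta-\vartheta)$, with $\vartheta$ frozen at the value of $\hat\theta$ at the critical time. Your remark that $\alpha_{j-1}$ must vanish identically in $(t,\hat\theta)$ on $\{x=0\}$, so that $\partial\alpha_{j-1}/\partial t$ and $\partial\alpha_{j-1}/\partial\hat\theta^\top$ vanish there too, spells out a step the paper leaves implicit.
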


\begin{proof}
Let $\vartheta\in\mathbb{R}^{r}$ be an arbitrary fixed vector
and define the shifted augmented state $\xi_{\vartheta}
    :=
    \operatorname{col}
    \bigl(x,\hat{\theta}-\vartheta\bigr).$
    
All explicit time singularities in the backstepping controller
are generated by $(T-t)^{-1}$. Since the controller is
constructed through a finite recursion, the augmented
closed-loop system can be written as
\begin{equation}
    \dot{\xi}_{\vartheta}
    =
    P_{0}(t,\xi_{\vartheta};\vartheta)
    +
    \sum_{\ell=1}^{N}
    \frac{
        P_{\ell}(\xi_{\vartheta};\vartheta)
    }{
        \Phi_{\ell}(T-t)
    },
    \quad 0\leq t<T,
    \label{eq:backstepping-general-form}
\end{equation}
where $ \Phi_{\ell}(T-t)=(T-t)^{q_{\ell}},
   q_{\ell}\in\mathbb{N}.$
Hence, every $\Phi_{\ell}(T-t)$ is continuous and strictly
positive for $t<T$.

It remains to verify the conditions on the numerator mappings.
At the first step,
\[
    \alpha_{1}
    =
    -\hat{\theta}^{\top}f_{1}
    -
    \frac{k_{1}z_{1}}{T-t},
    \quad
    \tau_{1}=z_{1}f_{1}.
\]
Since $f_{1}$ is smooth and $f_{1}(0)=0$, all corresponding
numerators are locally Lipschitz in $\xi_{\vartheta}$ and
vanish at $\xi_{\vartheta}=0$.

Suppose that this property holds up to the $(j-1)$th step.
The quantities
\[
    z_j=x_j-\alpha_{j-1},
\]
\[
    \tau_j
    =
    \tau_{j-1}
    +
    z_j
    \left(
        f_j-
        \sum_{\iota=1}^{j-1}
        \frac{\partial\alpha_{j-1}}{\partial x_\iota}
        f_\iota
    \right),
\]
and $\alpha_j$ are recursively constructed through finite
sums, products and partial derivatives of smooth functions,
together with powers of $(T-t)^{-1}$. Therefore, all newly
generated numerator mappings remain locally Lipschitz.

Moreover, at $\xi_{\vartheta}=0$, one has $ x=\mathbf{0},\hat{\theta}=\vartheta.$
Using $f_j(0)=0$ and the above recursive definitions gives
\[
    z_j=0,\quad
    \alpha_j=0,\quad
    \tau_j=0,
    \quad j=1,\ldots,n.
\]
In particular,
\[
    u(t,0;\vartheta)=0,
    \quad
    \dot{\hat{\theta}}(t,0;\vartheta)=\tau_n(t,0;\vartheta)=0.
\]
Thus,
\[
    P_0(t,0;\vartheta)=0,
    \quad
    P_\ell(0;\vartheta)=0,
    \quad \ell=1,\ldots,N,
\]
and all numerator mappings in
\eqref{eq:backstepping-general-form} are locally Lipschitz.
Therefore, for every fixed $\vartheta$, all the conditions of
Theorem~\ref{maintheorem} are satisfied.

Now let $t_c\in[0,T)$ be arbitrary and choose the fixed vector $\vartheta_c:=\hat{\theta}(t_c).$
Since $x(0)\neq0$,
\[
    \xi_{\vartheta_c}(0)
    =
    \operatorname{col}
    \bigl(x(0),\hat{\theta}(0)-\vartheta_c\bigr)
    \neq0.
\]
By Theorem~\ref{maintheorem}, $\xi_{\vartheta_c}(t_c)\neq0.$
However,
\[
    \xi_{\vartheta_c}(t_c)
    =
    \operatorname{col}
    \bigl(x(t_c),
    \hat{\theta}(t_c)-\vartheta_c\bigr)
    =
    \operatorname{col}\bigl(x(t_c),0\bigr).
\]
Hence, $ x(t_c)\neq0.$ Moreover,
since $t_c<T$ is arbitrary, $x(t)\neq0,\forall\,t\in[0,T).$
\end{proof}

In addition, Theorem~2 of \cite{duibiao} has already proven that $\lim_{t\to T^-}x(t)=0$.
 Therefore, the  closed-loop system is exact $T$-finite-time stable.

\subsection{Prescribed-time stabilization with PLE}

In this subsection, we apply Theorem~\ref{maintheorem2} to the parametric Lyapunov equations (PLEs)-based prescribed-time control systems \cite{zhouassumption,Zhang2025PrescribedTime}.

Consider the following system with $t\in[0,T)$,
\begin{equation*}
    \dot{x}=Ax+bu+\phi(t,x,u),
    \label{eq:zhou_plant}
\end{equation*}
where $x\in\mathbb{R}^n$, $u\in\mathbb{R}$, and
\begin{equation*}
A=
\begin{bmatrix}
0&1&0&\cdots&0\\
0&0&1&\cdots&0\\
\vdots&&\ddots&\ddots&\vdots\\
0&0&\cdots&0&1\\
0&0&\cdots&0&0
\end{bmatrix},
\quad
b=
\begin{bmatrix}
0\\
\vdots\\
0\\
1
\end{bmatrix}.
\end{equation*}
The nonlinear term is written as
\begin{equation*}
    \phi(t,x,u)
    =
    \begin{bmatrix}
        \phi_1(t,x,u)&\cdots&\phi_n(t,x,u)
    \end{bmatrix}^\top .
\end{equation*}
The nonlinearities satisfy a triangular linear-growth condition of the form
\begin{equation} \label{philine}
    |\phi_i(t,x,u)|
    \le
    \sum_{j=1}^{i}c_{ij}|x_j|  + \sum_{j=1}^{i}d_{ij}|u|, i=1,\ldots,n,
\end{equation}
where $c_{ij},d_{ij}\ge0$ and
    $\phi(t,0,u)=0$.

Let $P(\gamma)$ be the 
positive-definite solution of the parametric Lyapunov equation
\begin{equation*}
    A^\top P(\gamma)
    +P(\gamma)A
    -P(\gamma)bb^\top P(\gamma)
    =
    -\gamma P(\gamma),
    \label{eq:ple}
\end{equation*}
where the function $\gamma:[0,T)\to
\mathbb{R}_{>0}$ is continuous. The prescribed-time state-feedback controller is
\begin{equation*}
    u(t,x)
    =
    -\frac{1}{2}b^\top P(\gamma(t))x,\quad 0\le t<T,
\end{equation*}
where  $ 0<\gamma(t)<+\infty$, $ t<T$,
and  $\lim_{t\to T^-}\gamma(t)=+\infty$ .
Therefore, the closed-loop system is 
\begin{equation}
     \dot{x}=
     \left[
        A-\frac{1}{2}bb^\top P(\gamma(t))
    \right]x
    +
    \phi\bigl(t,x,-\frac{1}{2}b^\top P(\gamma(t))x\bigr).
    \label{eq:G}
\end{equation}

\begin{corollary}
Consider the system~\eqref{eq:G} under the 
PLE controller given above. 
Suppose the condition \eqref{philine} is satisfied.
 Then, for all $x(0)\neq 0$, the corresponding closed-loop
trajectory satisfies $x(t)\neq 0, \forall\,t\in[0,T)$.
\end{corollary}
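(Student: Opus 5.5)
We will apply Theorem~\ref{maintheorem2}, since the nonlinearity $\phi$ is only assumed to satisfy the growth bound \eqref{philine}. It is not assumed to be Lipschitz, so Theorem~\ref{maintheorem} is not available. The plan is to write the closed-loop system \eqref{eq:G} in the form \eqref{mainsystem} and to check Assumption~\ref{linear-growth} with a locally integrable coefficient on $[0,T)$.

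\textbf{Step 1: casting into the form \eqref{mainsystem}.} We take $N=0$ and put the whole right-hand side into $P_0$:
\[
P_0(t,x):=\Bigl[A-\tfrac12 bb^\top P(\gamma(t))\Bigr]x+\phi\bigl(t,x,-\tfrac12 b^\top P(\gamma(t))x\bigr).
\]
Alternatively, one may choose a single term with $\Phi_1\equiv 1$, which is continuous and positive, so that the hypothesis on $\Phi_k$ in Theorem~\ref{maintheorem2} is met trivially. All singular behaviour as $t\to T$ is then absorbed into the coefficient $a_0(t)$. Only local integrability on $[0,T)$ is needed, which is exactly what Assumption~\ref{linear-growth} allows.

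\textbf{Step 2: the growth bound.} The unique positive-definite solution $P(\gamma)$ of the PLE is known in closed form. We have $P(\gamma)=\gamma^{-1}W^{-1}(\gamma)$ with $W(\gamma)=\int_0^\infty e^{-\gamma s}e^{-As}bb^\top e^{-A^\top s}\,ds$, whose entries are polynomial in $1/\gamma$. Hence $P(\gamma)$ has polynomial entries in $\gamma$ and depends continuously on $\gamma>0$. Since $\gamma(\cdot)$ is continuous on $[0,T)$, the function $\kappa(t):=\|b^\top P(\gamma(t))\|$ is continuous on $[0,T)$. Then $|u|\le \tfrac12\kappa(t)\|x\|$. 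Combining this with \eqref{philine}, the bound $|x_j|\le\|x\|$, and the norm equivalence noted after Assumption~\ref{linear-growth} yields
\[
\|P_0(t,x)\|\le a_0(t)\|x\|,\qquad a_0(t):=\|A\|+\tfrac12\kappa(t)+\sqrt n\,\Bigl(\max_{i}\sum_{j\le i}c_{ij}+\tfrac12\kappa(t)\max_i\sum_{j\le i}d_{ij}\Bigr).
\]
This bound holds for all $x$, so any radius $r>0$ can be used. The coefficient $a_0$ is continuous on $[0,T)$ and hence belongs to $L^1_{\mathrm{loc}}([0,T))$, even though it blows up as $t\to T^-$.

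\textbf{Step 3: conclusion and the main obstacle.} Theorem~\ref{maintheorem2} then gives $x(t)\neq0$ on $[0,T)$ for every absolutely continuous solution with $x(0)\neq0$. Existence of such solutions follows from Assumption~\ref{ass:caratheodory}, which is standing; if needed, it can be checked from measurability of $\phi$ in $t$.

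The only delicate point is the continuity, and hence local boundedness, of $t\mapsto P(\gamma(t))$. First I would verify it from the explicit formula above. If that route turns out awkward, I would instead use the scaling identity $P(\gamma)=\gamma\,L_\gamma P(1)L_\gamma$ with $L_\gamma=\mathrm{diag}(\gamma^{n-1},\ldots,\gamma,1)$, which exhibits polynomial dependence on $\gamma$ directly. Either way, continuity of $\gamma$ transfers to $a_0$, and no Lipschitz property of $\phi$ is ever needed.
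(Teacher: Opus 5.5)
Your proof is correct. It uses Theorem~\ref{maintheorem2} as the paper does, but with a different decomposition.

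The paper uses the scaling identity $P(\gamma)=\gamma D_n(\gamma)P_nD_n(\gamma)$ to write the feedback as $\sum_{k=1}^n\gamma^k(t)H_kx$. It then sets $\Phi_k(T-t):=1/\gamma^k(t)$ and $P_k(x)=H_kx$, leaves $Ax+\phi$ in $P_0$, and simply asserts that \eqref{philine} gives the required bounds.

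You instead take $N=0$ and absorb everything, including the singular feedback, into $a_0(t)$. This needs only continuity of $t\mapsto P(\gamma(t))$ on $[0,T)$. Your route makes explicit a point the paper passes over. Because $\phi$ depends on $u=-\frac12 b^\top P(\gamma(t))x$, the $d_{ij}|u|$ terms in \eqref{philine} produce a coefficient proportional to $\|b^\top P(\gamma(t))\|$. That coefficient blows up as $t\to T^-$ and cannot be placed in a constant $a_k$. So even in the paper's decomposition, $a_0$ must be exactly the kind of continuous-on-$[0,T)$, unbounded-near-$T$ function you construct. The split into $\Phi_k$ terms is therefore a structural display, not a logical necessity.

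What each route buys: the paper's version presents the closed loop in the canonical form \eqref{mainsystem} with explicit time-varying gains. Yours is shorter and does not rely on the integrator-chain scaling structure; it works verbatim whenever $\gamma\mapsto P(\gamma)$ is continuous. It also shows that Theorem~\ref{maintheorem2} only uses local integrability of the total coefficient $\ell$ on compact subintervals of $[0,T)$.

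One small correction: the closed form you quote has a spurious factor. Multiplying the PLE on both sides by $W=P^{-1}$ gives $(A+\frac{\gamma}{2}I)W+W(A+\frac{\gamma}{2}I)^\top=bb^\top$. Hence $P(\gamma)=W^{-1}(\gamma)$ with your $W$, not $\gamma^{-1}W^{-1}(\gamma)$; for $n=1$, $P=\gamma$ and $W=1/\gamma$. This does not affect your argument, which only uses continuity in $\gamma>0$. Your fallback scaling identity is correct and is the one the paper uses.
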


\begin{proof}
The PLE solution \cite{zhouassumption} has the scaling structure
\begin{equation*}
    P(\gamma)
    =
    \gamma D_n(\gamma)P_nD_n(\gamma),
\end{equation*}
where $D_n(\gamma)
    =
    \operatorname{diag}
    \left\{
        \gamma^{n-1},
        \gamma^{n-2},
        \ldots,
        \gamma,
        1
    \right\},
    P_n=P(1).$

Let $p_{ij}$ denote the $(i,j)$th entry of $P_n$. Since $b=e_n$, the time-varying feedback term can be expanded as
\begin{equation*}
    -\frac{1}{2}bb^\top P(\gamma(t))x
    =
    -\frac{1}{2}
    \sum_{j=1}^{n}
    p_{nj}\gamma^{\,n-j+1}(t)e_nx_j.
\end{equation*}

For $k=1,\ldots,n$, define
\begin{equation*}
    H_k
    :=
    -\frac{1}{2}
    p_{n,n-k+1}
    e_ne_{n-k+1}^\top .
\end{equation*}
Then
\begin{equation*}
    -\frac{1}{2}bb^\top P(\gamma(t))x
    =
    \sum_{k=1}^{n}\gamma^k(t)H_kx.
\end{equation*}

Let $\Phi_k(T-t):=\frac{1}{\gamma^k(t)}$. Then we have
   $ \lim_{t\to T^-}\Phi_k(T-t)=0$.
According to  inequality \eqref{philine},
the closed-loop system \eqref{eq:G} satisfies all the conditions of Theorem \ref{maintheorem2}.
Therefore, no nonzero trajectory of the closed-loop system \eqref{eq:G} can
reach the origin before the prescribed time $T$.
\end{proof}

In addition, Theorem~1 of \cite{zhouassumption} has already proven that $\lim_{t\to T^-}x(t)=0$.
Hence, the controller of  \cite{zhouassumption} achieves exact $T$-finite-time stability in Definition \ref{exactT}.

\subsection{Prescribed-Time Consensus}

In this subsection, Theorem~\ref{theo4} is applied to the
distributed prescribed-time leader-following controller proposed in
\cite{Ye2025Decentralized}.

Consider the large-scale interconnected multiagent system consisting
of $N$ high-order nonlinear subsystems
\[
\dot{x}_i
 =
 Ax_i+Bu_i+f_i(t,x_1,\ldots,x_N),
\quad
y_i=Cx_i,
\]
where $x_i\in\mathbb{R}^n$, $u_i\in\mathbb{R}$, and only the output
$y_i$ is available for feedback. The nonlinear term $f_i$ contains
the uncertain local dynamics and the physical interactions among the
subsystems.

The communicating agents are divided into several leader--follower
groups. In the $h$-th group, the agent indexed by $c_h$ is the leader,
whereas the remaining agents are followers. The follower graph is
undirected and connected, and every follower can receive information
from its corresponding leader. Consider the distributed case
$\Omega_B=\varnothing$ and suppose that Assumption~2 and the
communication conditions in \cite{Ye2025Decentralized} hold, with
bounded leader inputs.

Recall the time-varying gain and scaling matrix
\[
r(t)=\frac{b_2}{T-t},
\quad
\Lambda_r=\operatorname{diag}\{r^n,r^{n-1},\ldots,r\}.
\]
For a follower $i$ in the $h$th group, define
\[
\eta_i=\Lambda_r(\hat{x}_i-\hat{x}_{c_h}),
\quad
\varepsilon_i
 =
 \Lambda_r(x_i-x_{c_h})-\eta_i.
\]
Stacking the scaled errors of all followers gives
\[
\xi=\operatorname{col}(\eta_i),
\quad
\varepsilon=\operatorname{col}(\varepsilon_i),
\quad
\chi=\operatorname{col}(\xi,\varepsilon).
\]
Since $\Lambda_r(t)$ is nonsingular for every $t<T$, $\chi=0$
implies
\[
x_i=x_{c_h},
\quad
\hat{x}_i=\hat{x}_{c_h}
\]
for every follower in the $h$th group.

\begin{corollary}
\label{cor:exact-consensus}
Consider the interconnected multiagent system above under the
distributed observer and controller proposed in
\cite{Ye2025Decentralized}. Suppose that $\Omega_B=\varnothing$,
Assumption~2 and the communication conditions in
\cite{Ye2025Decentralized} hold, and all leader inputs are bounded.
Then every solution satisfying $\chi(0)\neq0$ satisfies $\chi(t)\neq0,
0\leq t<T.$
\end{corollary}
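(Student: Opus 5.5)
The plan is to recast the closed-loop disagreement dynamics of \cite{Ye2025Decentralized} in the form \eqref{eq:mas_error_system} with state $\chi$, and then invoke Theorem~\ref{theo4} with the Osgood function $\omega(s)=s$. In that case the conditions reduce to the local homogeneous linear-growth bounds of Assumption~\ref{linear-growth}.

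\textbf{Step 1: derive the $\chi$-dynamics.} Differentiate $\eta_i=\Lambda_r(\hat x_i-\hat x_{c_h})$ and $\varepsilon_i$ using $\dot\Lambda_r=\frac{r}{b_2}\,\mathrm{diag}\{n,n-1,\ldots,1\}\,\Lambda_r$, which follows from $\dot r=r^2/b_2$. Then substitute the distributed observer and controller of \cite{Ye2025Decentralized}. Both are linear in the scaled quantities, with gains that are polynomials in $r=b_2/(T-t)$. I expect
\[
\dot\chi=\sum_{k}\frac{\mathcal{A}_k\chi}{(T-t)^{k}}+P_0(t,\chi),
\]
where the $\mathcal{A}_k$ are constant matrices built from the Laplacian, the leader-pinning matrix and the observer/controller gains. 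Here $P_0$ collects the scaled interconnection differences $\Lambda_r\bigl(f_i-f_{c_h}\bigr)$. Each $\Phi_k(T-t)=(T-t)^k$ is continuous and positive on $[0,T)$, and $\|\mathcal{A}_k\chi\|\le\|\mathcal{A}_k\|\,\|\chi\|$, so these terms satisfy the growth bound with $a_k=\|\mathcal{A}_k\|$.

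\textbf{Step 2: bound $P_0$ linearly in $\|\chi\|$.} Assumption~2 of \cite{Ye2025Decentralized} is a triangular linear-growth condition on the $f_i$, of the same type as \eqref{philine}. I would use it to bound each component of $\Lambda_r(f_i-f_{c_h})$ by $r^{\,n-j+1}$ times a combination of the unscaled differences $|x_{i,l}-x_{c_h,l}|$ with $l\le j$. Rewriting these differences as $r^{-(n-l+1)}$ times components of $\varepsilon_i+\eta_i$ yields powers $r^{\,l-j}\le \max(1,r)^{n}$. Since $r$ is bounded on every compact subinterval of $[0,T)$, one obtains $\|P_0(t,\chi)\|\le a_0(t)\|\chi\|$ with $a_0\in L^1_{\mathrm{loc}}([0,T))$. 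Alternatively, any residual positive powers of $r$ can be absorbed into further $\Phi_k$ terms.

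\textbf{Step 3 (main obstacle): the leader terms.} The leaders' inputs enter the follower-minus-leader differences and need not vanish at $\chi=0$. Example~1 shows that a non-vanishing additive term can cause premature convergence, so this must be ruled out. I would first check, from the structure of the protocol in \cite{Ye2025Decentralized}, that in the distributed case $\Omega_B=\varnothing$ the follower input contains the leader input as a feedforward term, so that it cancels in $\dot x_i-\dot x_{c_h}$. Similarly, the leader's own interconnection $f_{c_h}$ should appear only through differences that vanish when $x_i=x_{c_h}$. If this cancellation holds, $\chi=0$ is invariant and the growth bounds of Step 2 apply. Theorem~\ref{theo4} with $\omega(s)=s$ then gives $\chi(t)\ne0$ for all $t\in[0,T)$ whenever $\chi(0)\neq0$. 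If the cancellation is only approximate, the fallback is to treat the leader input as a bounded exogenous signal and argue directly on $\|\chi\|$ as in the proof of Theorem~\ref{maintheorem2}. That argument needs a leader-error term that vanishes at consensus, so boundedness alone would not suffice.
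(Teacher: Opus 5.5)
Your plan matches the paper's proof: write the scaled error dynamics as $\dot\chi=P_0(t,\chi)+P_1(\chi)/(T-t)$ with $P_1(\chi)=\mathcal{A}_{\chi}\chi$ and $P_0=\frac{b_2}{T-t}\operatorname{col}(0,\bar{F})$, use Assumption~2 of \cite{Ye2025Decentralized} to get $\bar{F}(t,0)=0$ and $\|P_0(t,\chi)\|\le c_F\theta(t)\|\chi\|$, and apply Theorem~\ref{theo4} with $\omega(s)=s$; the paper imports the $\tau$-time error equations of \cite{Ye2025Decentralized}, which contain no additive leader-input term, instead of re-deriving them from $\dot\Lambda_r$, and relying on that reference is how it disposes of your Step~3 without a fallback. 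One small fix in Step~2: for $l\le j$ the bound $r^{l-j}\le\max(1,r)^n$ fails when $r<1$, but $r(t)\ge b_2/T$ on $[0,T)$ bounds $r^{l-j}$ uniformly, so $a_0\in L^1_{\mathrm{loc}}([0,T))$ still holds.
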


\begin{proof}
Under the transformed time $ \tau=\ln(T/(T-t))$,
the closed-loop disagreement dynamics derived in
\cite{Ye2025Decentralized} are
\[
\begin{aligned}
\frac{d\xi}{d\tau}
&=
b_2\bigl(\mathcal{A}_c\xi
+\mathcal{A}_{\phi}\varepsilon\bigr)
+\mathcal{D}_0\xi,\\
\frac{d\varepsilon}{d\tau}
&=
b_2\bigl(\mathcal{A}_0\varepsilon
+\bar{F}(t,\chi)\bigr)
+\mathcal{D}_0\varepsilon,
\end{aligned}
\]
where $\bar{F}$ collects the nonlinear differences between the
followers and their respective leaders. Since
$d\tau/dt=(T-t)^{-1}$, the corresponding dynamics on the original
time axis can be written as
\begin{equation}
\dot{\chi}
 =
 P_0(t,\chi)
 +
 \frac{P_1(\chi)}{T-t},
\quad
0\leq t<T,
\label{eq:closed-consensus-error}
\end{equation}
where
\[
P_0(t,\chi)
 =
 \frac{b_2}{T-t}
 \operatorname{col}\bigl(0,\bar{F}(t,\chi)\bigr),
\]
\[
P_1(\chi)
 =
 \mathcal{A}_{\chi}\chi,
\quad
\mathcal{A}_{\chi}
 =
 \begin{bmatrix}
 b_2\mathcal{A}_c+\mathcal{D}_0
 &
 b_2\mathcal{A}_{\phi}
 \\
 0
 &
 b_2\mathcal{A}_0+\mathcal{D}_0
 \end{bmatrix}.
\]

If $\chi=0$, all follower--leader state and observer differences
vanish. Assumption~2 of \cite{Ye2025Decentralized} then gives
$\bar{F}(t,0)=0$. Hence,
\[
P_0(t,0)=0,
\quad
P_1(0)=0,
\]
and the set $\{\chi=0\}$ is positively invariant.

Furthermore, Assumption~2 and the scaled coordinates yield, for some
constant $c_F>0$,
\[
\|\bar{F}(t,\chi)\|
 \leq
 c_F\frac{\theta(t)}{r(t)}\|\chi\|.
\]
Using $r(t)=b_2/(T-t)$ gives
\[
\|P_0(t,\chi)\|
 \leq
 c_F\theta(t)\|\chi\|,
\quad
\|P_1(\chi)\|
 \leq
 \|\mathcal{A}_{\chi}\|\|\chi\|.
\]
Since $\theta(t)$ is locally integrable on $[0,T)$, the growth
conditions in Theorem~\ref{theo4} hold with
\[
a_0(t)=c_F\theta(t),
\quad
a_1=\|\mathcal{A}_{\chi}\|,
\quad
\Phi_1(T-t)=T-t.
\]

Choose $\omega(s)=s$ on interval $[0,r]$. Then $\omega$ is an Osgood function because
\[
\int_{0^+}^{r}\frac{ds}{\omega(s)}
 =
 \int_{0^+}^{r}\frac{ds}{s}
 =+\infty.
\]
Therefore, Theorem~\ref{theo4} implies
\[
\chi(0)\neq0
\Longrightarrow
\chi(t)\neq0, \quad
0\leq t<T.
\] 
The consensus cannot be established before  $t=T$.
\end{proof}

On the other hand, the prescribed-time stability result in
\cite{Ye2025Decentralized} establishes $\lim_{t\to T^-}\xi(t)=0,
\lim_{t\to T^-}\varepsilon(t)=0$,
and hence $\lim_{t\to T^-}\chi(t)=0$.
Therefore, the consensus is achieved exactly at $t=T$.

\begin{remark}
 The control methods considered in this section are only representative examples. They do not establish a one-to-one correspondence between the proposed sufficient conditions and specific controller design methods. The applicability of the proposed framework mainly depends on the regularity and local growth properties of the resulting closed-loop vector field. Therefore, the proposed results are not limited to adaptive backstepping, PLEs-based control, or distributed leader-following consensus. Other prescribed-time controller design methods can also use these results to exclude premature convergence and verify exact convergence at the prescribed time, provided that the corresponding closed-loop system satisfies one of the sufficient conditions developed in this paper. This further demonstrates the generality of the proposed framework.
\end{remark}

\section{Conclusions}
\label{section6}
This paper has established a unified theoretical framework for excluding premature convergence in prescribed-time control systems. It has been shown that local Lipschitz continuity, a local homogeneous linear-growth bound, and the Osgood condition each provide a sufficient criterion ensuring that every trajectory starting from a nonzero initial condition remains nonzero before the prescribed time. Specifically, local Lipschitz continuity, together with the equilibrium condition, implies local homogeneous linear growth, while the linear-growth condition is a particular case of the Osgood condition. Therefore, the Osgood condition offers the most general criterion, although its verification may be less straightforward. When combined with an established prescribed-time stabilization, the exclusion of premature convergence guarantees that the settling time is exactly the prescribed time instant $T$. 
Applications to adaptive backstepping control, parametric-Lyapunov-equation-based stabilization, and distributed leader-following consensus further demonstrate the generality of the proposed results. These analyses provide a reusable tool for upgrading existing prescribed-time convergence results into rigorous exact-settling-time guarantees.

\bibliographystyle{IEEEtran}
\bibliography{references}

%\ifincludeauthorbios
%\input{sections/06_biographies}
%\fi

\end{document}